\title[Coupling solutions of BGG-equations]{Coupling solutions of BGG-equations in conformal spin geometry}
\author{Matthias Hammerl} 
\email{matthias.hammerl@univie.ac.at}
\address{Faculty of Mathematics,
University of Vienna, Nordbergstra\ss e~15, A--1090 Wien, Austria}
\date{\today}
\subjclass[2000]{ 35N10, 53A30, 53B15} 
\keywords{overdetermined systems, conformal spin geometry,twistor spinors,
conformal Killing fields, conformal Killing forms, almost Einstein
scales}
\def\mb{\mathbf} 
\def\mr{\mathrm}
\def\mc{\mathcal}
\newtheorem{thm}{Theorem}[section]
\newtheorem{prop}[thm]{Proposition}
\newtheorem*{prop*}{Proposition}
\newtheorem*{thm*}{Theorem}
\newtheorem{lem}[thm]{Lemma}
\newtheorem*{lem*}{Lemma}
\newtheorem*{cor*}{Corollary}
\theoremstyle{definition}
\newtheorem{defn}[thm]{Definition}
\newtheorem*{defn*}{Definition}
\theoremstyle{remark}
\newtheorem{rem}[thm]{Remark}
\newtheorem*{remk*}{Remark}
\def\mb{\mathbf}
\def\mr{\mathrm}
\def\mc{\mathcal}
\newlength{\equwidth}
\newcommand{\crd}{\mbox{$                                     
\begin{picture}(9,8)(1.6,0.15)
\put(1,0.2){\mbox{$ D \hspace{-7.8pt} /$}}
\end{picture}$}}
\def\dirac{\crd}
\DeclareMathOperator{\PP}{P}
\DeclareMathOperator{\aEs}{aEs}
\DeclareMathOperator{\cKf}{cKf}
\DeclareMathOperator{\Hom}{Hom}
\def\A{\mathcal A}
\def\X{\mathfrak X}
\def\ga{\gamma}
\def\de{\delta}
\def\rh{\rho}
\def\si{\sigma}
\def\om{\omega}
\def\Ga{\Gamma}
\def\De{\Delta}
\def\Th{\Theta}
\def\La{\Lambda}
\def\Si{\Sigma}
\def\Ph{\Phi}
\def\Om{\Omega}
\def\pa{\partial}
\def\t{\otimes}
\def\delstar{\pa^*}
\def\goesto{\rightarrow}
\def\lar{\leftarrow}
\def\bdot{\mathrm{\bullet}}
\def\na{\mathrm{\nabla}}
\def\dcov{\mathrm{d^{\nabla}}}
\def\im{\mathrm{im}\ }
\def\G{\mathcal{G}}
\DeclareMathOperator{\End}{End}
\DeclareMathOperator{\GL}{GL}
\DeclareMathOperator{\Spin}{Spin}
\DeclareMathOperator{\CSpin}{CSpin}
\def\ti{\tilde}
\def\rr{\ensuremath{\mathbb{R}}}
\def\CC{\ensuremath{\mathcal{C}}}
\def\ZZ{\ensuremath{\mathcal{Z}}}
\def\BB{\ensuremath{\mathcal{B}}}
\def\HH{\ensuremath{\mathcal{H}}}
\def\VV{\ensuremath{\mathcal{V}}}
\def\Hom{\ensuremath{\mathrm{Hom}}}
\def\GL{\ensuremath{\mathrm{GL}}}
\def\g{\ensuremath{\mathfrak{g}}}
\def\ce{\ensuremath{\mathcal{E}}}
\DeclareMathOperator{\sym}{sym}
\newcommand{\lpl}{
  \mbox{$
  \begin{picture}(12.7,8)(-.5,-1)
  \put(2,0.2){$+$}
  \put(6.2,2.8){\oval(8,8)[l]}
  \end{picture}$}}
\begin{document}

\maketitle

\begin{abstract}
BGG-equations are geometric overdetermined systems of
PDEs on parabolic geometries. Normal solutions of BGG-equations
are particularly interesting and we give a simple formula
for the necessary and sufficient additional integrability conditions on
a solution. We then discuss a procedure for coupling
known solutions of BGG-equations to produce new ones.
Employing a suitable calculus
for conformal spin structures  this yields 
explicit coupling formulas and conditions between
almost Einstein scales, conformal Killing forms and twistor spinors. 
Finally we discuss a class of generic twistor spinors
that provides an invariant decomposition of conformal Killing
fields.
\end{abstract}

\section{Introduction}\label{secintr}
  Let $M$\ be a smooth manifold and $(\G\goesto M,\om)$\ a parabolic
  geometry of type $(G,P)$. Here $G$\ a is semi-simple Lie group,  $P\subset G$
  a parabolic subgroup and $\om\in\Om^1(\G,\g)$\ the Cartan connection
form of the geometry with values in the Lie algebra $\g$\ of $G$.
Geometries of interest could for instance be projective structures, conformal structures or CR-structures. The Cartan connection form $\om$\ generalizes
the properties of the Maurer-Cartan form $\om^{MC}\in\Om(G,\g)$ to
the curved setting, \cite{cap-slovak-par}.

We are interested in overdetermined operators on such geometries
which appear as the first operators in the BGG-sequence
  \[\HH_0\overset{\Th_0}{\goesto}\HH_1\overset{\Th_1}{\goesto}\cdots\overset{\Th_{n-1}}{\goesto}\HH_n\]
of natural differential operators as constructed in \cite{bgg-2001}\
and then presented in a simplified form in \cite{bgg-calderbank-diemer}.

The study of the BGG-sequence and in particular
of the first BGG-operators, and the BGG-equations
these describe, has seen much interest in recent years.
It has been shown that the infinitesimal symmetries of a parabolic geometry
can be described by a BGG-equation, \cite{cap-infinitaut},
and that the BGG-equations are always of finite type, \cite{prolong,hsss}.
Moreover, solutions of BGG-equations have been shown to appear naturally as characterizing
objects of Fefferman-type spaces, \cite{cap-gover-cr,
mrh-sag-rank2,mrh-sag-twistors}.

Since the BGG-machinery that describes
these equations starts from a uniform algebraic setting it is
also reasonable to ask whether this construction can be used
to obtain relations between solutions of different BGG-equations.
An abstract formulation of this question was introduced in
\cite{bgg-calderbank-diemer} via the notion of cup product.
Explicit calculations and results were achieved
in \cite{gover-silhan-2006} under the name of helicity
raising and lowering for conformal Killing forms.
While not mentioning the BGG-machinery
there, it is clear that this kind of construction is possible
for certain classes of BGG-operators on parabolic geometries.

\subsection{Outline}

In section \ref{secbgg} we briefly review the construction
of the BGG-operators and the prolongation connection. We discuss
normality of a solution and give a simple formula which provides
the necessary and sufficient equations. We then introduce
coupling maps for solutions of BGG-equations. For $|1|$-graded
parabolic geometries and coupling maps where the target space
is the domain of a BGG-operator of first order
we give necessary and sufficient coupling conditions.

In section \ref{secconf} the coupling procedure of section \ref{secbgg}
will be applied to conformal spin structures. We briefly introduce
these structures and then discuss several interesting first
BGG-operators: those governing almost Einstein scales, conformal Killing
forms and generic twistor spinors. We then derive explicit
coupling maps and conditions for these objects. 

The formulas obtained for coupling with twistor spinors are
particularly interesting when this spinor is generic in a
suitable way. We show in section \ref{subsecgen} that every
generic twistor spinor gives rise to a natural decomposition
of conformal Killing fields. For signatures 
$(2,3)$\ and $(3,3)$, such twistor spinors have been constructed
in \cite{mrh-sag-twistors}\ and we discuss this result from
the viewpoint of coupling maps.

\subsubsection*{\textbf{Acknowledgments}}
Discussions with  Andreas \v{C}ap, A. Rod Gover, Felipe Leitner, Katja Sagerschnig and Josef \v{S}ilhan have been very valuable. 
The author also gladly acknowledges support from
 project P 19500-N13 of the "Fonds zur
F\"{o}rderung der wissenschaftlichen Forschung" (FWF)
and from START prize project Y377 of the Austrian Federal Ministry of Science and Research.

\section{BGG-equations, normality and coupling}\label{secbgg}
We begin with a very brief introduction of the necessary tractor calculus
for parabolic geometries. For more background we refer to \cite{cap-gover-tractor,cap-slovak-par}.
\subsection{Tractor bundles}
  For every irreducible $G$-representation $V$\
  one associates the tractor bundle $\VV=\G\times_P V$.
  It is well known, cf. 
  that $\mc{V}$ carries its canonical
    tractor connection, denoted by $\na=\na^V$.
  By forming the exterior covariant derivative $\dcov$\ of $\na$\ on
  $\mc{V}$-valued differential forms
  this gives rise to the sequence
  \[\CC_0\overset{\na}{\goesto}\CC_1\overset{d^{\na}}{\goesto}\CC_2\overset{d^{\na}}{\goesto}\cdots\]
  on the chain spaces $\CC_k=\Om^k(M,\VV)$.

    Moreover, one has the (algebraic) Kostant\ co-differential $\delstar: \CC_{k+1}  \goesto \CC_k$,\ $\delstar\circ\delstar=0,$\ which yields the complex
  \[\CC_0\overset{\delstar}{\lar}\CC_1\overset{\delstar}{\lar}\CC_2\overset{\delstar}{\lar}\cdots .\]
  This complex gives rise to spaces $\ZZ_k=\ker\delstar$\ of cycles,
  borders $\BB_k=\im\delstar$\ and homologies $\HH_k=\ZZ_k/\BB_k$.
  The canonical surjections are denoted $\Pi_k:\ZZ_k\goesto\HH_k$.

\subsection{BGG-operators and the prolongation connection}
The BGG-machinery of \cite{bgg-2001}
is based on canonical differential splitting operators $L_k:\Ga(\HH_k)\goesto \Ga(\ZZ_k)$: 
A section $s\in\Ga(\ZZ_k)$\ is of the form $L_k\si,\si\in\Ga(\HH_k)$\ if and
only if $d^{\na}s\in\ker\delstar$. This uniquely defines the operators
$L_k$.

Now, given a section $\si\in\Ga(\HH_k)$, one can form
$\dcov(L_k\si)\in\Om^{k+1}(M,\mc{V})$, which by assumption on $L_k$\ is a section
of $\ZZ_{k+1}$, and can therefore be projected to $\Ga(\HH_{k+1})$.
The composition $\Th_{k}:=\Pi_{k+1}\circ\dcov\circ L_{k}$\
is the $k+1$-st BGG-operator.

For $k=0$\ one obtains the first BGG-operator $\Th_0=\Pi_1\circ\na\circ L_0$, $\Th_0:\Ga(\HH_0)\goesto\Ga(\HH_1)$, which is an overdetermined operator.
One does in fact have that the system $\si\in\Ga(\HH_0),\ \Th_0(\si)=0$\
is of finite type:  In \cite{hsss}
  a natural modification $\ti\na=\na+\Psi$\ with
  $\Psi\in\Om^1(M,\End(\mc{V}))$\ was constructed which
  has the following property:
\begin{prop}[\cite{hsss}]\label{prop-hsss}
  The solutions $\si\in\Ga(\mc{H}_0)$\ of $\Th_0(\si)=0$\ are
  in $1:1$-correspondence with the $\ti\na$-parallel
  sections of $\mc{V}$. This isomorphism is realized with
  the first BGG-splitting operator $L_0:\Ga(\HH_0)\goesto\Ga(\mc{V})$\
  and the canonical projection $\Pi_0:\Ga(\mc{V})\goesto\Ga(\HH_0)$.
\end{prop}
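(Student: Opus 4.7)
The plan is to exploit the defining property of the splitting: $L_0\si\in\Ga(\VV)$ is the unique section with $\Pi_0(L_0\si)=\si$ and $\delstar(\na L_0\si)=0$. Consequently $\Th_0(\si)=\Pi_1(\na L_0\si)$ vanishes if and only if $\na L_0\si\in\Ga(\BB_1)=\Ga(\im\delstar)$. The content of the proposition is then the construction of $\Psi\in\Om^1(M,\End(\VV))$ with two crucial features: (i) $\Psi$ takes values in $\im\delstar$, so that $\ti\na s=0$ forces $\delstar(\na s)=0$; and (ii) for $s=L_0\si$ with $\Th_0(\si)=0$ one has $\Psi(L_0\si)=-\na L_0\si$.

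Such a $\Psi$ is built iteratively using the grading of $\g$. The tractor bundle $\VV$ carries a finite filtration, and the tractor connection $\na$ decomposes, relative to this filtration, into an algebraic part (essentially a Lie algebra differential) and terms of strictly lower filtration order. The Kostant codifferential $\delstar$ strictly lowers the filtration degree, and by Kostant's theorem one obtains an algebraic $P$-invariant complement $\TT$ to $\ker\delstar$ in $\CC_1$. Starting from $\Psi^{(0)}=0$, at each step $\Psi^{(k)}$ is adjusted to absorb the $\TT$-valued obstruction of $(\na+\Psi^{(k-1)})$ at the current level of the filtration; because the filtration has finite length, the iteration stabilizes after finitely many steps and produces the required $\Psi$, taking values in $\im\delstar$.

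With $\Psi$ available, both directions of the bijection follow readily. If $\ti\na s=0$, then $\na s=-\Psi(s)\in\Ga(\im\delstar)$, hence $\delstar(\na s)=0$, which forces $s=L_0(\Pi_0 s)$; setting $\si:=\Pi_0 s$ one has $\na L_0\si\in\Ga(\im\delstar)$, so $\Th_0(\si)=\Pi_1(\na L_0\si)=0$. Conversely, if $\Th_0(\si)=0$ then $\na L_0\si\in\Ga(\im\delstar)$, and the defining property of $\Psi$ on such $L_0\si$ yields $\Psi(L_0\si)=-\na L_0\si$, hence $\ti\na(L_0\si)=0$. The main obstacle is the inductive construction of $\Psi$ satisfying (i) and (ii) simultaneously, including a careful bookkeeping of filtration degrees to guarantee termination; once that is in place, the equivalence of $\ti\na$-parallel sections with BGG solutions is formal.
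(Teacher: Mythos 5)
The paper offers no proof of this proposition to compare against: it is imported verbatim from \cite{hsss}, with only the remark that computing $\Psi$ ``involves one iteration for every filtration component of the tractor bundle''. So your proposal has to be measured against the cited construction. Your formal bookkeeping is correct as far as it goes: granting a natural $\Psi$ with values in $\im\delstar$ (your (i)) and with $\Psi(L_0\si)=-\na L_0\si$ on all solutions (your (ii)), both directions of the correspondence follow exactly as you write, and (i) also delivers a fact you use tacitly, namely that $\na$ and $\ti\na$ determine the same splitting operator $L_0$ and the same first BGG-operator, since $\delstar(\Psi s)=0$ and $\Pi_1(\Psi s)=0$.

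The genuine gap is the construction of $\Psi$, which is the entire content: your (ii) is, solution by solution, precisely the hard direction of the proposition, so positing a $\Psi$ with property (ii) and then ``deducing'' the bijection is circular unless the iteration actually produces (ii) --- and the iteration as you describe it cannot. For a solution $\si$ the error term $\ti\na L_0\si$ lies in $\Ga(\im\delstar)\subset\Ga(\ker\delstar)$ (this is exactly what $\Th_0\si=0$ says), so its component in any complement $\TT$ of $\ker\delstar$ in $\CC_1$ vanishes identically: absorbing ``$\TT$-valued obstructions'' removes nothing, and the dangerous $\im\delstar$-part is left untouched. Worse, your steps are internally inconsistent: a correction $\Psi^{(k)}s$ with values in $\im\delstar$, as required by your (i), cannot change a $\TT$-component of $\na s$ at all. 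Finally, the obstruction you would need to absorb involves derivatives of $\si$, whereas $\Psi$ must be one fixed algebraic bundle map serving all solutions simultaneously; you name this ``the main obstacle'' but do not resolve it. The construction of \cite{hsss} sidesteps both issues by normalizing the curvature rather than any solution-dependent error term: by induction on homogeneity (one step per filtration component, inverting the Kostant Laplacian $\pa\delstar+\delstar\pa$, which is invertible on $\im\delstar$) one constructs a natural $\Psi$ of positive homogeneity, with values in $\im\delstar$, such that the curvature $\ti R$ of $\ti\na$ satisfies $\delstar(\ti R\,s)=0$ for every $s\in\Ga(\mc{V})$. This purely algebraic condition then yields your (ii) by the same two-line argument as in the proof of Proposition~\ref{propcoupling}: for a solution, $\ti\na s\in\Ga(\ZZ_1)$ and $d^{\ti\na}(\ti\na s)=\ti R\,s\in\ker\delstar$, whence $\ti\na s=\ti L_1(\Pi_1(\ti\na s))=\ti L_1(\Th_0\si)=0$, where $\ti L_1$ is the splitting operator associated to the modified connection. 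Without this curvature normalization, or some equivalent device converting differential consequences of $\Th_0\si=0$ into algebraic identities on $L_0\si$, your argument does not close.
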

  We call $\ti\na$ the prolongation connection of $\Th_0$\ since
  the equation $s\in\Ga(\mc{V}), \ti\na s=0$\ is the
  prolongation of the system $\si\in\Ga(\mc{H}_0),\Th_0(\si)=0$.

\subsection{Normal solutions}
  If a section $s\in\Ga(\mc{V})$\ is $\na$-parallel, we automatically
  have that $s\in\im L_0$, since $\delstar(\na s)$\ is vanishes
  trivially, and so $s=L_0 \si$\ with $\si=\Pi_0 s$.
  Then $\Th_0 \si=\Pi_1(\na L_0 \si)=\Pi_1(\na s)=0$.
  We say that those $\si\in\ker\Th_0$\ that
  satisfy $\na L_0 \si=0$\ are the \emph{normal\ solutions}\
  of $\Th_0(\si)=0$. If the geometry is flat, all solutions are
  normal.

  Now, if $\si\in\Ga(\HH_0)$\ is an arbitrary solution of $\Th_0(\si)=0$,
  then equivalently, with $s=L_0\si$\ and $\ti\na$\ the
  prolongation connection,
  \begin{align*}
    0=\ti\na s=\na s+\Psi s.
  \end{align*}
  Thus $\Psi L_0\si$\ is the obstruction against
  normality of $\si$.
  However, it turns out that determining normality
  of a solution does not need computation of $\Psi$, which is
  always possible but depends on a procedure that involves
  one iteration for every filtration component of the tractor bundle,
  \cite{hsss}.

To state the simple criteria for normality we need to introduce 
the curvature of the Cartan connection form $\om$.
It is defined as $K(\xi,\xi')=d\om(\xi,\xi')+[\om(\xi),\om(\xi')]$\ for
$\xi,\xi'\in\X(\G)$. This determines a $2$-form on $\G$\ with values in $\g$.
By forming the \emph{adjoint tractor bundle} $\mc{A}:=\G\times_P\g$\
and using horizontality and $P$-equivariancy of $K$, we can
equivalently regard it as $K\in\Om^2(M,\A M)$.
Now since $V$\ is a $G$-representation, the 
Lie algebraic action of $\g$\ on $V$\ yields 
action of $\mc{A}=\G\times_P\g$\ on $\mc{V}=\G\times_P V$,
which we denote via $\bdot:\mc{A}\goesto\End(\mc{V})$.

\begin{prop}\label{propcoupling}
  A solution $\si$\ of $\Th_0(\si)=0$\ is
  normal if and only if $\delstar\bigl(K\bdot (L_0 \si) \bigr)=0$.
\end{prop}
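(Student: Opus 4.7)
The plan is to characterize normality via Proposition \ref{prop-hsss} as the vanishing of $\Psi L_0\si$, and then to relate $\Psi L_0\si$ to $\delstar(K\bdot L_0\si)$ using the tractor curvature identity $\dcov\circ\dcov=K\bdot$ on $\Ga(\VV)$, which expresses the curvature of the tractor connection in terms of the Cartan curvature $K$.

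Set $s:=L_0\si$. By Proposition \ref{prop-hsss}, $\si$ solves $\Th_0\si=0$ iff $\ti\na s=0$, i.e.\ $\na s=-\Psi s$. The splitting property defining $L_0$ gives $\delstar\na s=0$, and the solution assumption $\Pi_1(\na s)=\Th_0\si=0$ places $\na s$ in $\ker\delstar\cap\im\delstar\subset\CC_1$. Normality is the strictly stronger requirement $\na s=0$, equivalently $\Psi s=0$.

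The forward implication is immediate: if $\na s=0$, then $K\bdot s=\dcov(\na s)=0$ as a section of $\Om^2(M,\VV)$, so in particular $\delstar(K\bdot s)=0$.

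For the converse, I would start from
\[ K\bdot s \;=\; \dcov(\na s) \;=\; -\dcov(\Psi s), \]
apply $\delstar$, and use the hypothesis to obtain $\delstar\dcov(\Psi s)=0$. Since $\delstar\Psi s=-\delstar\na s=0$, we also have $\Psi s\in\Ga(\ZZ_1)$. The content of the converse is that, on the specific elements of $\Ga(\ZZ_1)$ of the form $\Psi L_0\si$ with $\si$ a solution, the operator $\delstar\dcov$ is injective; equivalently, $\Psi s$ is determined by $\delstar(K\bdot s)$. To establish this one unwinds the filtration-by-filtration construction of $\Psi$ from \cite{hsss}: at each associated graded piece the correction is defined by inverting a Kostant--Laplacian-type operator, and a straightforward induction on the filtration level of $\VV$ shows that $\delstar\dcov(\Psi s)=0$ forces $\Psi s=0$. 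The main obstacle will be precisely this inductive step — carefully tracking, at each stage of the prolongation, which homogeneous component of $\Psi s$ is controlled by the corresponding homogeneous component of $\delstar(K\bdot s)$, and verifying that the algebraic operator used to solve for $\Psi$ in \cite{hsss} is invertible on exactly that component.
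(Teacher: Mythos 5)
Your easy direction (normal $\implies \delstar(K\bdot s)=0$) is fine and matches the paper. The gap is in the converse: you reduce it to the claim that $\delstar\circ\dcov$ is injective on sections of the form $\Psi L_0\si$ with $\si$ a solution, and then only sketch a hoped-for induction through the filtration-by-filtration construction of $\Psi$ from \cite{hsss}, yourself flagging the inductive step as the unresolved ``main obstacle''. As written this is a plan, not a proof. Moreover, be careful about how the injectivity claim is even phrased: $\delstar\circ\dcov$ is very far from injective on $\Ga(\ZZ_1)$ --- its kernel there is all of $\im L_1\cong\Ga(\HH_1)$, by the very characterization of $L_1$ stated in the paper --- so any such statement must crucially invoke the extra hypothesis $\Pi_1(\Psi s)=-\Th_0\si=0$, which your write-up never isolates as the decisive input. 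Finally, the whole detour through $\Psi$ cuts against the point the paper makes immediately before the proposition: deciding normality should \emph{not} require computing $\Psi$.

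Once you isolate that input, the converse is immediate from the defining property of the splitting operators: a section $t\in\Ga(\ZZ_1)$ equals $L_1(\Pi_1 t)$ if and only if $\dcov t\in\ker\delstar$. Apply this with $t=\na s$, $s=L_0\si$: the definition of $L_0$ gives $\delstar(\na s)=0$, so $\na s\in\Ga(\ZZ_1)$, and the hypothesis gives $\delstar\bigl(\dcov(\na s)\bigr)=\delstar(K\bdot s)=0$; hence $\na s=L_1\bigl(\Pi_1(\na s)\bigr)=L_1(\Th_0\si)=L_1(0)=0$, i.e.\ $\si$ is normal. This is exactly the paper's argument, and it needs no information about $\Psi$ beyond what you already used ($\ti\na s=0$, which here is not even necessary). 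If you did push your induction through, you would in effect be re-proving the uniqueness clause of the BGG splitting operator $L_1$ component by component rather than simply citing it; I recommend replacing the second half of your argument by the two-line $L_1$ argument above.
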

\begin{proof}
    Since $\na$\ is the natural connection induced by $\om$\ on $\mc{V}$
  the curvature of $\na$\ is $R=K\bdot\in\Om^2(M,\End(\mc{V}))$.

Denote $s=L_0\si$. Now if $\delstar(K\bdot s)=0$, then since $R=d^{\na}\circ\na$
we have $d^{\na}(\na s)\in\ker \delstar$. Thus
\[
\na s=L_1(\Pi_1(s))=L_1(\Th_0 \si)=0.
\]

The converse is clear, since if $\na s=0$, also
$0=\dcov\circ\na s=Rs=K\bdot s$, 
and then $0=\delstar(K\bdot s)=\delstar(K\bdot L_0\si)$.
\end{proof}

We now discuss a procedure for obtaining new solutions from
known ones. This will be particularly simple for normal solutions,
but milder conditions on the solutions will be sufficient for
interesting classes of equations.

\subsection{Coupling maps}\label{subseccoupl}
  Let $V,V'$\ and $W$\ be $G$ representations
  and $C:V\times V'\goesto W$\ be a $G$-equivariant
  bilinear map. The corresponding tractor map
  is denoted  $\mb{C}:\mc{V}\times\mc{V'}\goesto\mc{W}.$

  It induces the (differential) coupling map $\mb{c}:\Ga(\HH_0^V)\times\Ga(\HH_0^{V'})\goesto\Ga(\HH_1^W)$,
  \begin{align*}
        (\si,\si')\mapsto \Pi_0^W\bigl(\mb{C}(L_0^V(\si),L_0^{V'}(\si'))\bigr).
  \end{align*}

  Since $\mb{C}:\mc{V}\times\mc{V'}\goesto\mc{W}$\ is algebraic and
  natural, we have that for all $s\in\Ga(\mc{V}), s'\in\Ga(\mc{V'})$,
  \begin{align}\label{Ccompat}
    \na^{W}\mb{C}(s,s')=\mb{C}(\na^Vs,s')+\mb{C}(s,\na^{V'}s').
  \end{align}
  In particular, if $\si\in\HH_0^V$\ and $\si'\in\HH_0^{V'}$\ are
  normal solutions of $\Th_0^V$\ resp. $\Th_0^{V'}$, then
  $\eta:=\mb{c}(\si,\si')$\ is a normal solution of $\Th_0^W$.

 \subsubsection{Coupling for $|1|$-graded parabolic geometries with $\Th_0^W$ of   first order}
  The operators $\Th_0^V$\ and $\Th_0^{V'}$\ have prolongation connections
  $\ti\na^V=\na^V+\Psi^V$, $\ti\na^{V'}=\na^{V'}+\Psi^{V'}$. We write
  $s=L_0^V \si,s'=L_0^{V'}\si'$.

  By definition, $t:=\mb{C}(s,s')$\ is a 
  lift of $\eta=\mb{c}(\si,\si')=\Pi_0^W(t)$, but
  one doesn't necessarily have $t=L_0^W\eta$:
  $\na^W t\in\CC_1^W=\Om^1(M,\mc{W})$\ need not
  lie in $\ZZ_1^W=\ker\delstar$. We will circumvent this
  problem by building a canonical extension of $\Pi_1^W:\ZZ_1^W\goesto\HH_1^W$\ to
  a map $\Pi_{1,\odot}^W:\CC_1^W\goesto\HH_1^W$.

  For this one uses the natural filtration
  of $W$\ that is induced 
  by the $P$-representation on $W$. The
  largest filtration component is just $\overline{W}:=\im\delstar\subset W$.
  The parabolic group $P$\ has a Levi factor $G_0\subset P$\
  and $W_0:=W/{\overline{W}}$\ is a well defined $G_0$-representation.
  On the level of associated bundles one
  obtains a semidirect composition series
  $\mc{W}=\mc{W}_0\lpl \overline{\mc{W}}$
  and this induces the semidirect composition series
  \begin{align*}
    \CC_1^W=T^*M \t \mc{W}=T^*M \t \mc{W}_0\lpl T^*M \t \overline{\mc{W}}.
  \end{align*}
  In particular, we have a canonical surjection
  $\Pi^{\mc{W}_0}_1:\CC_1^W\goesto T^*M\t\mc{W}_0$.

  The fact that
  $\Th_0^W=\Pi_1^W\circ\na\circ L_0^W$\ is an operator of first order
  is equivalent to $\HH_0$\ not depending on $\Om^1(M,\overline{\mc{W}})$,
  i.e., $\Pi_1^W(\ker\delstar\cap T^*M\t \overline{\mc{W}})=\{0\}$, \cite{prolong}. Then $\HH_0\subset T^*M\t \mc{W}_0$\ is the highest
  weight part with respect to the $G_0$-structure.
  So composing the projection to the highest weight part
  with $\Pi^{\mc{W}_0}_1$\ yields a map
  $\Pi_{1,\odot}^W:\CC_1^W\goesto \HH_1$
  with the property that its restriction to $\ZZ_1^W$\ is just $\Pi_1^W$.
  The operator $\Th_0$\ can now be written
  \begin{align}\label{th0form}
    \Th_0^W(\eta)=\Pi_{1,\odot}^W(\na^W t).
  \end{align}  

  \begin{prop}\label{propTh0W}
  For $\si\in\ker\Th_0^V,\si'\in\ker\Th_0^{V'}$\ and $\eta=\mb{c}(\si,\si')$\ one has
  $\Th_0^W(\eta)=-\Pi^W_{1,\odot}\bigl(\mb{C}(\Psi^V s,s')+\mb{C}(s,\Psi^{V'}s')\bigr).$
  \end{prop}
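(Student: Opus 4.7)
The plan is to chain together three ingredients already assembled in the paper: the characterization of solutions of $\Th_0^V,\Th_0^{V'}$ as parallel sections of the prolongation connections, the Leibniz-type compatibility \eqref{Ccompat} of the tractor map $\mb{C}$ with the tractor connections, and the formula \eqref{th0form} which expresses $\Th_0^W$ via the extension $\Pi^W_{1,\odot}$ applied to $\na^W t$ for an \emph{arbitrary} lift $t$ of $\eta$.

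First I would invoke Proposition \ref{prop-hsss}: since $\si\in\ker\Th_0^V$ and $\si'\in\ker\Th_0^{V'}$, the lifts $s=L_0^V\si$ and $s'=L_0^{V'}\si'$ satisfy $\ti\na^V s=0$ and $\ti\na^{V'}s'=0$, which by the definition $\ti\na=\na+\Psi$ rewrite as
\begin{align*}
\na^V s=-\Psi^V s,\qquad \na^{V'} s'=-\Psi^{V'}s'.
\end{align*}
Next, by naturality of $\mb{C}$ (equation \eqref{Ccompat}), the lift $t=\mb{C}(s,s')$ of $\eta$ satisfies
\begin{align*}
\na^W t=\mb{C}(\na^V s,s')+\mb{C}(s,\na^{V'}s')=-\bigl(\mb{C}(\Psi^V s,s')+\mb{C}(s,\Psi^{V'}s')\bigr).
\end{align*}

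Finally, because $t$ is a lift of $\eta$, formula \eqref{th0form} gives $\Th_0^W(\eta)=\Pi^W_{1,\odot}(\na^W t)$, and substituting the above expression for $\na^W t$ yields the claimed identity.

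The only point that requires a moment's care, and is the main conceptual step, is the use of \eqref{th0form} with the \emph{non-canonical} lift $t=\mb{C}(s,s')$ rather than with $L_0^W\eta$: one must know that $\Pi^W_{1,\odot}\circ\na^W$ annihilates the ambiguity between different lifts of $\eta$. This is built into the construction of $\Pi^W_{1,\odot}$ preceding the proposition, which exploits that $\Th_0^W$ is of first order and the $|1|$-grading ensures $T^*M\t\overline{\mc{W}}$ is killed by the projection onto the highest-weight component; once this is in place, the computation above is a direct application of \eqref{Ccompat} and \eqref{th0form}.
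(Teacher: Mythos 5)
Your proposal is correct and follows essentially the same route as the paper's proof: rewrite $\na^V s=-\Psi^V s$ and $\na^{V'}s'=-\Psi^{V'}s'$ via the prolongation connections, apply the Leibniz compatibility \eqref{Ccompat} to $t=\mb{C}(s,s')$, and conclude with \eqref{th0form}. Your closing remark about why \eqref{th0form} may be applied to the non-canonical lift $t$ is precisely the point the paper settles beforehand in constructing $\Pi^W_{1,\odot}$ (using first order of $\Th_0^W$ and the $|1|$-grading), so flagging it explicitly is a sound, if redundant, addition.
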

  \begin{proof}
      For $\si\in\ker\Th_0^V$\ we have equivalently
  that $s=L_0^V$\ satisfies
  \begin{align*}
    0=\ti\na^V s=\na^Vs+\Psi^Vs,
  \end{align*}
  so $\na^Vs=-\Psi^V s$,
  and analogously for $\si'\in\ker\Th_0^{V'}$.
  Therefore
   $\na^Wt=\na^W\mb{C}(s,s')=\mb{C}(\na^V s,s')+\mb{C}(s,\na^{V'} s')= 
    -\mb{C}(\Psi^V s,s')-\mb{C}(s,\Psi^{V'}s').$
Thus, using \eqref{th0form}, this proves the claim.
  \end{proof}
  In particular, this yields necessary and
  sufficient coupling conditions.  

\section{Coupling in conformal spin geometry}\label{secconf}

  A conformal spin structure of signature $(p,q)$\
  on an $n=p+q$-manifold $M$\ is a reduction
  of structure group of $TM$\ from $\GL(n)$\ to
  $\CSpin(p,q)=\rr_+\times\Spin(p,q)$.
  This induces a conformal class $\mc{C}$ of pseudo-Riemannian
  signature $(p,q)$-metrics on $M$.
  The associated bundle to the spin representation
  $\De^{p,q}$\ of $\CSpin(p,q)$ with $\rr_+$\ acting trivially
  is the (unweighted) conformal spin bundle $\mc{S}$.

  We will often employ the conformal density bundles 
  $\ce[w]$, $w\in\rr$, which are associated to the $1$-dimensional
  $\rr_+$ representations $c\mapsto c^w$. We also employ abstract
  index notation $\ce_a=\Ga(T^*M)=\Om^1(M),\ce^a=\Ga(TM)=\X(M)$\
  with multiple indices denoting tensor products, e.g.
  $\ce_{ab}=\Ga(T^*M\t T^*M)$.

The curvature quantities of the conformal
structure $\mc{C}$\ are computed with respect
to a $g\in\mc{C}$.
The symmetric $2$-tensor
\begin{align*}
  \mr{P}_g:=\frac{1}{n-2}(\mr{Ric}_g-\frac{\mr{Sc}_g}{2(n-1)}g)
\end{align*}
is the \emph{Schouten\ tensor}; this is a trace modification
of the Ricci curvature $\mr{Ric}_g$\ by a multiple
of the scalar curvature $\mr{Sc}_g$. The trace of
the Schouten tensor is denoted $J_g=g^{pq}\PP_{pq}$.
We will omit the
subscripts $g$\ hereafter when giving a formula
with respect to some $g\in\mc{C}$.

The complete obstruction against
conformal flatness of $(M,\mc{C})$\ with, $\mc{C}$\ having
signature $p+q\geq 3$, is the \emph{Weyl curvature}
\begin{align*}
  C_{ab\; d}^{\;\;\; c}:=R_{ab\; d}^{\;\;\; c}-2\de_{[a}^c\PP_{b]d}+2g_{d[a}\PP_{b]}^{\; c},
\end{align*}
where $R$\ is the Riemannian curvature tensor of $D$\
and indices between square brackets are skewed over, (cf. e.g. \cite{eastwood-notes-conformal}.)

 A conformal spin structure of signature $(p,q)$\
  is equivalently described by a parabolic geometry of type $(\Spin(p+1,q+1),P)$,
  with $P\subset G=\Spin(p+1,q+1)$\ the stabilizer of an isotropic
  ray in the standard representation on $\rr^{p+1,q+1}$, cf. \cite{mrh-thesis}.

  We are now going to consider the first BGG-operators and coupling
  formulas for three 
  $\Spin(p+1,q+1)$-representations:
  for the standard representation on $\rr^{p+1,q+1}$,
  its exterior powers $\La^{k+1}\rr^{p+1,q+1},k\geq 0$\ and 
  the spin representation $\De^{p+1,q+1}$.

\subsection{BGG-operators in conformal spin geometry}
  \subsubsection{The almost Einstein scale operator $\Th_0^{\rr^{p+1,q+1}}$}
  With $T=\rr^{p+1,q+1}$\ the standard representation of
  $\Spin(p+1,q+1)$, one obtains the standard\ tractor
    bundle $\mc{T}=\G\times_P \rr^{p+1,q+1}$ together with
  its tractor metric $\mb{h}$. 

  It has a semidirect composition series
  $\mc{T}=\ce[1]\lpl\ce_a[1]\lpl\ce[-1]$\ and
   with respect to any $g\in\mc{C}$\ one obtains a decomposition
   $\mc{T}\overset{g}{\cong}
    \begin{pmatrix}
        \ce[-1] \\
        \ce_a[1] \\
        \ce[1]
    \end{pmatrix}$.

  With respect to the Levi-Civita connection
  $D$\ of $g\in\mc{C}$\ the first BGG-operator of $T$\ is
  \begin{align*}
    \Th_0^T:\ce[1]\goesto\ce_{(ab)}[2],\ 
    \si\mapsto \mb{tf}(DD\si+\PP\si),
  \end{align*}
  with  $\mb{tf}$\ denoting the trace-free part and
  round brackets symmetrization.

   If $\si\in\ker\Th_0^T$\ is non-trivial, then the
   complement of its zero set in $M$ is open dense,
   and on that set $\si$\ describes a rescaling of $g$\ to
   an Einstein metric $\si^{-2}g$. One therefore
   says that the solutions of $\Th_0^T(\si)=0$\ are
   the \emph{almost\ Einstein\ scales}\ of $\mc{C}$, cf. \cite{gover-aes}. We denote
   $\aEs(\mc{C})=\ker\Th_0^T\subset\ce[1]$.

   We will need an explicit formula for the first BGG-splitting operator
   of $\mc{T}$, cf. e.g. \cite{thomass}:
\begin{align}\label{splitStd} &L_0^{\mc{T}}:\ce[1]\goesto
\Ga(\mc{T}),\ \si\mapsto
  \begin{pmatrix} -\frac{1}{n}g^{pq}(D_{pq}\si+\PP_{pq}\si) \\ D\si \\
\si
  \end{pmatrix}.
\end{align}

  This case is particularly simple since 
  the standard tractor connection $\na^T$\
  is already the prolongation connection. So all
  solutions of $\Th_0(\si)=0$\ are normal and correspond
  to parallel standard tractors.

  \subsubsection{The conformal Killing form operator $\Th_0^{\La^{k+1}\rr^{p+1,q+1}}$}
  Now let $V=\La^{k+1}\rr^{p+1,q+1}$\ for $k\geq 1$\ be an
  exterior power of the standard representation
  and $\mc{V}=\G\times_P V$\ the associated tractor bundle.
  $\mc{V}$ has a semidirect composition series
$\mc{E}_{[a_1\cdots a_k]}[k+1]\lpl(\mc{E}_{[a_1\cdots a_{k+1}]}[k+1]
\oplus\mc{E}_{[a_1\cdots a_{k-1}]}[k-1])
\lpl\mc{E}_{[a_1\cdots a_k]}[k-1]$.

The first BGG-operator of $V$\ is
 \begin{align*}
   &\Th_0^V:\ce_{[a_1\cdots a_k]}[k+1]\goesto \mc{E} _{c[a_1\ldots a_k]}[k+1],\\
   &\si_{a_1\cdots a_k}\mapsto
D_c \si_{a_1\cdots a_k}-{D}_{[a_0}\si_{a_1\cdots a_k]}-\frac{k}{n-k+1} g_{c[a_1} g^{pq}{D}_{|p}\si_{q|a_2\cdots a_k]}
 \end{align*}
and its solutions are the conformal Killing forms.

Our coupling formulas below will employ the first BGG-splitting operator
$L_0:\HH_0\goesto \VV$,
given with respect to a $g\in\mc{C}$\ and the corresponding splitting of
the semidirect composition series. For the computation we refer to \cite{mrh-bgg,mrh-thesis}.
\begin{align}\label{L0form}
  \si\mapsto
  \begin{pmatrix}
        \begin{pmatrix}
          -\frac{1}{n(k+1)}D^pD_p\si_{a_1\cdots a_k}
          +\frac{k}{n(k+1)}D^pD_{[a_1}\si_{|p|a_2\cdots a_k]} 
          +\frac{k}{n(n-k+1)}D_{[a_1}D^p\si_{|p|a_2\cdots a_k]}
          \\
          +\frac{2k}{n}\mr{P}^p_{\; [a_1}\si_{|p|a_2\cdots a_k]}
          -\frac{1}{n}J\si_{a_1\cdots a_k}
    \end{pmatrix}
    \\
    {D}_{[a_0}\si_{a_1\cdots a_k]} \; | \;
    -\frac{1}{n-k+1}g^{pq}{D}_p\si_{qa_2\cdots a_k}
    \\
    \si_{a_1\cdots a_k}
  \end{pmatrix}.
\end{align}
Here indices between vertical bars are not skewed over.

 The prolongation connection of $\Th_0^V$\ is $\ti\na^V=\na^V+\Psi^V$
 for $\Psi^V\in\Om^1(M,\End(\mc{V})$\ as computed in \cite{mrh-bgg}.
 For our purposes it is enough to know its part of lowest homogeneity, which is,
  \begin{align}\label{prolongV}
   &\bar\Psi^V\in\Hom(\ce_{[a_1\cdots a_k]}[k+1],\mc{E}_c\t\bigl(\mc{E}_{[a_1\cdots a_{k+1}]}[k+1]
 \oplus\mc{E}_{[a_1\cdots a_{k-1}]}[k-1]\bigr)),\\\notag
   &\si\mapsto L(\si)\oplus R(\si)
 \end{align}
with
\begin{align}\label{formulasLR}
  &L(\si)=        \frac{k+1}{2}C_{[a_0a_1\ |c}^{\quad\;\ p}\si_{p|a_2\cdots a_k]}
      +\frac{(k-1)(k+1)}{2n} g_{c[a_0}C_{a_1a_2}^{\quad\;\ pq}\si_{|pq|a_3\cdots a_k]}
\\\notag       &R(\si)=\frac{(k-1)(n-2)}{2(n-k)n} C_{c[a_2}^{\quad\; pq}\si_{|pq|a_3\cdots a_k]}
-\frac{(k-1)(k-2)}{2(n-k)n}C_{[a_2a_3}^{\quad\; pq}\si_{|cpq|a_4\ldots a_k]}.
\end{align}

  \subsubsection{The twistor spinor operator $\Th_0^{\De^{p+1,q+1}}$}
  With $\De^{p+1,q+1}$\ the spin representation of $\Spin(p+1,q+1)$\ we
  form the associated spin\ tractor\ bundle\
  $\Si:=\G\times_P\De^{p+1,q+1}$.
  Recall the the (unweighted) spin bundle $\mc{S}$ of the conformal structure.
  Then $\Si$\ has a semidirect composition series $\Si=\mc{S}[\frac{1}{2}]\lpl\mc{S}[-\frac{1}{2}]$.
  
  With respect to the Levi-Civita connection $D$\ of
  a metric $g\in\mc{C}$\ the first BGG-operator of $\De^{p+1,q+1}$\ is
$    \Th_0^{\De^{p+1,q+1}}:\Ga(\mc{S}[\frac{1}{2}])\goesto\Ga( \ce_c \t \mc{S}[\frac{1}{2}])$,
  \begin{align*}
    \chi\mapsto D_c\chi+\frac{1}{n}\ga_c\dirac\chi,
  \end{align*}
  where $\ga\in\ce_c\t\End(\mc{S})$\ the Christoffel symbol of $\mc{S}$ and $\dirac\chi=g^{pq}\ga_pD_q\chi$.
The solutions of $\Th_0^{\De^{p+1,q+1}}(\chi)=0$\ are \emph{twistor spinors}.

Using $\Si\overset{g}{\cong}
    \begin{pmatrix}
        \mc{S}[-\frac{1}{2}] \\
        \mc{S}[\frac{1}{2}]
    \end{pmatrix}$ the first BGG-splitting splitting operator
is 
\begin{align}\label{twisplit} 
L_0^{\De^{p+1,q+1}}:\Ga(S[\frac{1}{2}])\goesto \Ga(\Si),\ 
   \chi\mapsto
  \begin{pmatrix} 
    \frac{\sqrt{2}}{n}\crd\chi \\ 
    \chi
  \end{pmatrix}.
\end{align}
  Also in this case, the tractor connection $\na^{\De^{p+1,q+1}}$\ coincides with the prolongation connection, which
  was employed in \cite{friedrich-conformalrelation,baum-friedrich-twistors,branson-spin,felipe-habil,mrh-thesis}).

We now relate Clifford multiplications
  and the canonical invariant pairings of the spin tractor bundle and
  and conformal spin bundle.

  \subsubsection{Clifford multiplication and invariant pairing}
  For every $g\in\mc{C}$\ we obtain identifications
  $\mc{T}\overset{g}{\cong}
    \begin{pmatrix}
        \ce[-1] \\
        \ce_a[1] \\
        \ce[1]
    \end{pmatrix},
  \Si\overset{g}{\cong}
    \begin{pmatrix}
        \mc{S}[-\frac{1}{2}] \\
        \mc{S}[\frac{1}{2}]
    \end{pmatrix}$.
  With respect to this decomposition
  the tractor metric is simply
  $\mb{h}=
  \begin{pmatrix}
    0 & 0 & 1 \\
    0 & g & 0 \\
    1 & 0 & 0
  \end{pmatrix}$
and tractor Clifford multiplication $\Ga$\ is given by
  \begin{align}\label{tcliffmult}
    &\Ga:\mc{T}\t\mc{S}\goesto \mc{S},\ 
    &\begin{pmatrix}\rh \\ \si_a \\ \si\end{pmatrix}\cdot \begin{pmatrix}\tau \\ \chi\end{pmatrix}=\begin{pmatrix}-\si_a\cdot\tau+\sqrt{2}\rh \chi \\ \si_a\cdot\chi-\sqrt{2}\si \tau\end{pmatrix}.
  \end{align}
  One easily checks that with this definition indeed
  \begin{align*}
    t_1\cdot(t_2\cdot s)+t_2\cdot(t_1\cdot s)=-2\mb{h}(t_1,t_2)\ \forall\ t_1,t_2\in\Ga(\mc{T}),s\in\Ga(\Si).
  \end{align*}

  The spin bundle $\mc{S}$\ carries a canonical pairing $\mb{b}:\mc{S}\t\mc{S}\goesto\rr$ which  is Clifford invariant in the sense that
$\mb{b}(\xi\cdot \chi,\chi')+(-1)^{p+1}\mb{b}(\chi,xi\cdot \chi')=0$\
for all $\xi\in\X(M),\chi,\chi'\in\Ga(\mc{S})$, cf. \cite{baum-pseudospin,kath}. The corresponding tractor spinor pairing is, \cite{mrh-thesis},
  \begin{align}\label{tcliffpairing}
    &\mb{B}:\Si\t\Si\goesto\rr,
    &\mb{B}\bigl(\begin{pmatrix}\tau \\ \chi\end{pmatrix},\begin{pmatrix}\tau'\\ \chi'\end{pmatrix}
\bigr)=\mb{b}(\chi,\tau')+(-1)^{p+1}\mb{b}(\chi',\tau),
  \end{align}
  which then satisfies (use \eqref{tcliffmult}), 
$\mb{B}(t\cdot X,X')+(-1)^{p}\mb{B}(X,t\cdot X')=0$
for all $t\in\Ga(\mc{T}),X,X'\in\Ga(\Si)$.

Having this background on some basic BGG-operators in conformal geometry
we can now derive a number of coupling formulas and conditions via the
method of section \ref{subseccoupl}.

\subsection{Coupling formulas}
  \subsubsection{Wedge coupling of conformal Killing forms}
  Given $s\in\Ga(\La^{k+1}\mc{T})$\ and $s'\in\Ga(\La^{k'+1}\mc{T})$\ we form
  $\mb{C}^{\wedge}(s,s'):=s\wedge s'$.
  Employing $\eqref{L0form}$\ we obtain the coupling map
\begin{align}\label{couplw1}
  &\mb{c}^{\wedge}:\ce_{[a_1\cdots a_k]}[k+1]\times \ce_{[a_1\cdots a_{k'}}[k'+1]\goesto
  \ce_{[a_1\cdots a_{k+k'+1}]}[k+k'+2]\\ \notag
  &(\si_{a_1\cdots a_k},\si'_{a_1\cdots a_{k'}})\mapsto 
  (k+1)\si_{[a_1\cdots a_k}D_{a_{k+1}}\si'_{a_{k+2}\cdots a_{k+k+1}]}\\\notag
  &\quad\quad\quad\quad\quad\quad\quad\quad+
  (-1)^{(k+1)(k'+1)}(k'+1)\si'_{[a_{1}\cdots a_{k'}}D_{a_{k'+1}}\si_{a_{k'+2}\cdots a_{k+k'+1}]}.
\end{align}

Employing Proposition \ref{propcoupling}, the
prolongation connection \eqref{prolongV}\ and some simple computatons
involving the symmetries of the Weyl curvature tensor $C$\ one shows:
\begin{prop}\label{propcw}
Assume that $\si\in\ker\Th_0^{\La^{k+1}\rr^{p+1,q}}$\ and $\si'\ker\Th_0^{\La^{k'+1}\rr^{p+1,q}}$. Then the coupled $(k+k'+1)$-form $\eta=\mb{c}^{\wedge}(\si,\si')$\ is
a conformal Killing form if and only if
\begin{align}\label{couplw2}
  (-1)^{k+1}C_{[a_1a_2\ |c}^{\quad\;\ p}\si_{p|a_3\cdots a_{k+1}}\si'_{a_{k+3}\cdots a_{k+k'+1}]}
  +\si_{[a_1\cdots a_k}C_{a_{k+1}a_{k+2}\ |c}^{\quad\quad\quad p}\si'_{p|a_{k+3}\cdots a_{k+k'+1}]}\\ \notag \overset{\odot}=0.
\end{align}
Here $\odot$\ denotes projection to the $\Spin(p,q)$-highest weight
part, which in this case are those elements in $\Ga( T^*M\t \La^k T^*M)$ with trivial
alternation and trivial trace.
\end{prop}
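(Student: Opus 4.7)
The plan is to invoke Proposition~\ref{propTh0W} with $V=\La^{k+1}\rr^{p+1,q+1}$, $V'=\La^{k'+1}\rr^{p+1,q+1}$, $W=\La^{k+k'+2}\rr^{p+1,q+1}$ and $\mb{C}=\wedge$. Setting $s=L_0^V\si$, $s'=L_0^{V'}\si'$, this gives
\begin{align*}
  \Th_0^W(\eta) = -\Pi_{1,\odot}^W\bigl(\mb{C}^{\wedge}(\Psi^V s, s')+\mb{C}^{\wedge}(s,\Psi^{V'}s')\bigr),
\end{align*}
and the proposition reduces to identifying this right-hand side (up to a nonzero constant) with the $\odot$-projection of the expression in \eqref{couplw2}.

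The first step is a filtration-degree bookkeeping showing that only the lowest-homogeneity component $\bar\Psi^V$ of $\Psi^V$, as given in \eqref{prolongV}--\eqref{formulasLR}, contributes. The top quotient $\mc{W}_0$ of $\mc{W}=\La^{k+k'+2}\mc{T}$ is $\ce_{[a_1\cdots a_{k+k'+1}]}[k+k'+2]$, which in terms of the graded pieces $Z_+=\ce[1]$, $Z_0=\ce_a[1]$, $Z_-=\ce[-1]$ of $\mc{T}$ is $\La^{k+k'+1}Z_0\wedge Z_+$. Any wedge that involves a $Z_-$ factor, or two $Z_+$ factors, cannot reach this top. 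Hence the higher-homogeneity parts of $\Psi^V$ (which shift the slot in $\mc{V}$ by at least two and therefore land in the bottom slot $\La^k Z_0\wedge Z_-$) are killed by $\Pi_1^{\mc{W}_0}$, as is the contribution of $\bar\Psi^V$ on the middle and bottom slots of $s$. The $R(\si)$-component of $\bar\Psi^V(\si)$ already lies in $\La^{k-1}Z_0\wedge Z_+\wedge Z_-$, so its wedge with any slot of $s'$ either produces two $Z_+$ factors or lands in a lower slot of $\mc{W}$ and drops out as well.

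What then remains is the wedge of $L(\si)\in\ce_c\t\ce_{[a_0\cdots a_k]}[k+1]$ with the top slot $\si'\in\ce_{[a_1\cdots a_{k'}]}[k'+1]$ of $s'$, plus the symmetric term with $\si$ and $L(\si')$. From \eqref{formulasLR}, the first summand $\frac{k+1}{2}C_{[a_0a_1\ |c}^{\quad\;\ p}\si_{p|a_2\cdots a_k]}$ of $L(\si)$, wedged with $\si'$, produces exactly the first term of \eqref{couplw2} up to a combinatorial constant. The second summand $\frac{(k-1)(k+1)}{2n}g_{c[a_0}C_{a_1a_2}^{\quad\;\ pq}\si_{|pq|a_3\cdots a_k]}$, once wedged with $\si'$, takes the form $g_{c[a_0}Y_{a_1\cdots a_{k+k'}]}$ for some $Y\in\La^{k+k'}T^*M$ and so sits in the trace component of $T^*M\t\La^{k+k'+1}T^*M$, which the highest-weight projection $\odot$ annihilates. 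The mirror analysis of $\mb{C}^{\wedge}(s,\bar\Psi^{V'}s')$ yields the second summand of \eqref{couplw2}, the sign $(-1)^{k+1}$ arising from the graded commutation of the wedge product together with the sign in \eqref{couplw1}.

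The main obstacle is the careful bookkeeping of anti-symmetrizations and binomial coefficients coming from $L_0^V$, $\bar\Psi^V$ and the wedge product so that the overall constant in front of the expression in \eqref{couplw2} comes out nonzero and uniform in $k,k'$. The symmetries of the Weyl tensor -- pair-antisymmetry, the first Bianchi identity, and trace-freeness -- are needed to dispose of intermediate contractions and to verify that no further residual terms appear outside the kernel of $\odot$.
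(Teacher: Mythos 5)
Your proposal follows essentially the same route as the paper, which disposes of Proposition~\ref{propcw} in one line by invoking the coupling condition of Proposition~\ref{propTh0W} (cited in the text, apparently by a label slip, as Proposition~\ref{propcoupling}) together with the lowest-homogeneity part \eqref{prolongV}--\eqref{formulasLR} of the prolongation connection and the symmetries of the Weyl tensor. Your filtration-degree bookkeeping --- only $L(\si)$ wedged with the top slot of $s'$ can reach the projecting slot of $\La^{k+k'+2}\mc{T}$, the $R(\si)$-term and higher-homogeneity parts of $\Psi$ carry a $\ce[-1]$-factor and drop out, and the $g_{c[a_0}\cdots}$-summand of $L(\si)$ is pure trace and is killed by $\odot$ --- is exactly the ``simple computation'' the paper leaves implicit, so the proposal is correct and matches the paper's argument.
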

\begin{rem}
  Assume that $k<k'$. In the case where $k=0$\ $\si\in\ker\Th_0^{\rr^{p+1,q+1}}$\ is an almost Einstein scale and \eqref{couplw2}\ simplifies to
  \begin{align*}
    C_{[a_1a_2\ |c}^{\quad\;\ p}\si'_{p|a_3\cdots a_{k+1}]}\overset{\odot}=0,
  \end{align*}
  since $\si$\ is non-vanishing on an open dense subset. This
  agrees with Theorem 5.1 of \cite{gover-silhan-2006}. Also
  for the special case $k=1$\ \eqref{couplw2}\ and \eqref{couplc2} below
  agree with Theorem 5.4 of \cite{gover-silhan-2006}.
\end{rem}

\subsubsection{Contraction coupling of conformal Killing forms}
\def\hookin{\lrcorner}

Let now $k'>k$. We employ the tractor metric $\mb{h}$\ to form a contraction map
$\mb{C}^{\hookin}:\La^{k+1}\mc{T}\times \La^{k'+1}\mc{T}\goesto\La^{k'-k}\mc{T}$.
The coupling map is then
\begin{align}\label{couplc1}
&\mb{c}^{\hookin}:\ce_{[a_1\cdots a_k]}[k+1]\times\ce_{[a_1\cdots a_{k'}]}[k'+1]\goesto
\ce_{[a_1\cdots a_{k'-k-1}]}[k'-k]\\\notag
&(\si_{a_1\cdots a_k},\si'_{a_1\cdots a_{k'}})\mapsto 
(k+1)\si^{p_1\cdots p_k}D^q\si'_{qp_1\cdots p_k a_1\cdots a_{k'-k-1}}
\\\notag
&\quad\quad\quad\quad\quad\quad\quad\quad\ +
(n-k'+1)\si'_{p_0\cdots p_ka_1\cdots a_{k'-k-1}} D^{p_0}\si^{p_1\cdots p_k}.
\end{align}

\begin{prop}\label{propcc}
If $\si$\ and $\si'$\ are conformal Killing forms the coupled $(k'-k-1)$-form $\eta=\mb{c}(\si,\si')$\ is also a conformal Killing form
if and only if
\begin{align}\label{couplc2}
&(n-k')C^{p_0p_1}_{\quad\; qc}\si^{qp_2\cdots p_k}\si'_{p_0\cdots p_k a_1\cdots a_{k'-k-1}}\\\notag
&-(k'-1)\si^{p_1\cdots p_k}C_{cp_1}^{\quad q_1q_2}
\si'_{q_1q_2p_2\cdots p_ka_1\cdots a_{k'-k-1}} \overset{\odot}{=}0.
\end{align}
\end{prop}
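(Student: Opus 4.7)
The plan is to apply Proposition \ref{propTh0W} exactly as in the proof of Proposition \ref{propcw}, but with the contraction map $\mb{C}^{\hookin}$ in place of $\mb{C}^{\wedge}$. Writing $s = L_0^V \si$, $s' = L_0^{V'} \si'$ and $\eta = \mb{c}^{\hookin}(\si,\si')$, Proposition \ref{propTh0W} gives
\begin{align*}
\Th_0^W(\eta) = -\Pi^W_{1,\odot}\bigl(\mb{C}^{\hookin}(\Psi^V s, s') + \mb{C}^{\hookin}(s, \Psi^{V'} s')\bigr),
\end{align*}
so the task reduces to showing that the right hand side vanishes precisely when \eqref{couplc2} holds.

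Since $\Pi^W_{1,\odot}$ factors through the quotient to lowest homogeneity and then through the $\Spin(p,q)$-highest weight projection (trace-free and alternation-free) on $T^*M\otimes\La^{k'-k-1}T^*M$, only the leading parts $\bar\Psi^V,\bar\Psi^{V'}$ of the prolongation terms from \eqref{prolongV}--\eqref{formulasLR} enter, and only the bottom slots $\si,\si'$ of $s,s'$ contribute. I would then substitute $\bar\Psi^V \si = L(\si)\oplus R(\si)$ and $\bar\Psi^{V'}\si' = L(\si')\oplus R(\si')$ into the contraction coupling at this leading order, using that at bottom weight the tractor metric $\mb{h}$ reduces to the conformal metric $g$ appearing in \eqref{couplc1}.

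Each of the four resulting summands is $C$-linear in either $\si$ or $\si'$. The first terms of $L(\si)$ and $L(\si')$, those built from $C_{[\,\cdot\,\cdot|c}{}^{\;p}$, produce after the $\mb{h}$-contraction and the trace-free alternation-free projection precisely the two Weyl-bilinear expressions appearing in \eqref{couplc2}. The second terms of $L(\si),L(\si')$ carry an explicit $g_{c[\,\cdot\,}$ factor, so the contraction in $\mb{C}^{\hookin}$ followed by $\Pi^W_{1,\odot}$ turns them into pure traces in the $c$-slot, which are killed by the highest-weight projection. The $R$-parts are Weyl traces on $\si$ or $\si'$; after the extra form contraction in $\mb{C}^{\hookin}$ they either become traces of $C$ on two of its slots (vanishing by trace-freeness of the Weyl tensor) or Bianchi-symmetric triples in three slots that cancel.

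The main obstacle is the combinatorial bookkeeping: one must track how the prefactors $\tfrac{k+1}{2}$, $\tfrac{(k-1)(k+1)}{2n}$, $\tfrac{(k-1)(n-2)}{2(n-k)n}$, $\tfrac{(k-1)(k-2)}{2(n-k)n}$ (and their primed analogues) from \eqref{formulasLR} combine with the coupling coefficients $(k+1)$ and $(n-k'+1)$ from \eqref{couplc1}, together with the internal skew-symmetrizations, to yield precisely the coefficients $(n-k')$ and $-(k'-1)$ in \eqref{couplc2}. Conversely, since the two tensor types in \eqref{couplc2} are independent irreducible components in $\ce_c\otimes\ce_{[a_1\cdots a_{k'-k-1}]}$, the vanishing of the full projected expression forces \eqref{couplc2}, giving the stated equivalence.
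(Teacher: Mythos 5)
Your overall strategy is the right one and coincides with the paper's (the paper states Proposition \ref{propcc} without a written-out proof, relying on exactly the recipe you describe: Proposition \ref{propTh0W} applied to $\mb{C}^{\lrcorner}$, the leading parts \eqref{prolongV}--\eqref{formulasLR} of the prolongation connections, and the projection $\Pi^W_{1,\odot}$). But your term-by-term accounting of which pieces survive is wrong, and the computation would derail. Track the filtration of $\La^{k'-k}\mc{T}$: the projecting slot $\mc{W}_0=\ce_{[a_1\cdots a_{k'-k-1}]}[k'-k]$ contains exactly one factor of the line $\ce[1]$. In $\mb{C}^{\lrcorner}(s,\Psi^{V'}s')$ the component $L(\si')$ sits in the slot $\ce_{[a_0\cdots a_{k'}]}[k'+1]$, all of whose tractor factors are of middle type; contracting the $(k+1)$-tractor-form $s$ into it therefore yields values of type $\La^{k'-k}(\text{middle})$, i.e.\ inside $\overline{\mc{W}}$, which $\Pi^W_{1,\odot}$ annihilates. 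So $L(\si')$ contributes nothing --- and it could not produce the second term of \eqref{couplc2} in any case, because the only pairing of $L(\si')$ with $s$ at that homogeneity goes through the slot $D_{[a_0}\si_{a_1\cdots a_k]}$ of $s$ and would carry a derivative of $\si$, whereas \eqref{couplc2} is undifferentiated in both arguments.

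Symmetrically, your claim that the $R$-parts die by trace-freeness or Bianchi cancellation is the opposite of what happens. The surviving contributions are $L(\si)$ contracted against the projecting slot $\si'$ of $s'$ (this is the first term of \eqref{couplc2}), and $\si$ --- the projecting slot of $s$, pairing under $\mb{h}$ with the injecting factor of the $\ce_{[a_1\cdots a_{k'-1}]}[k'-1]$-slot --- contracted against $R(\si')$; the latter is exactly the source of the second term, as the coefficients betray: the factor $(k'-1)$ and the denominator $(n-k')$ in the $R$-terms of \eqref{formulasLR} are what produce, after clearing denominators, the weights $(n-k')$ and $-(k'-1)$ in \eqref{couplc2}. (By the same filtration count, $R(\si)$ contributes nothing.) The case $k=0$ is a decisive consistency check: there $\Psi^{\La^1}=0$, so your accounting would leave either nothing or a $D\si$-term, while the remark following the proposition, in agreement with Theorem 5.1 of \cite{gover-silhan-2006}, identifies the obstruction as the pure $R(\si')$-type term $C_{c[a_1}^{\quad q_1q_2}\si'_{q_1q_2a_2\cdots a_{k'-1}]}\overset{\odot}{=}0$. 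Finally, your disposal of the $g_{c[a_0}$-terms in $L(\si)$ is too quick: when the skewed index partnered with $g$ is contracted, the index $c$ is transferred onto $\si'$ rather than creating a trace, and absorbing the resulting third index-type into the two displayed in \eqref{couplc2} requires the algebraic Bianchi identity for $C$, not the highest-weight projection alone.
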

\begin{rem}
  In the case where $k=0$\ and $\si$\ is an almost Einstein scale Proposition
  \ref{propcc}\ reduces to a case treated in \cite{gover-silhan-2006}, Theorem 5.1: \eqref{couplc2}\ is trivially satisfied for $k'=1$
  and simplifies to
  \begin{align}\label{couplc3}
    C_{c[a_1}^{\quad q_1q_2}
\si'_{q_1q_2 a_2\cdots a_{k'-1}]} \overset{\odot}{=}0
  \end{align}
  for $k'\geq 2$. Since the Weyl curvature tensor is skew-symmetric
  in the first two slots \eqref{couplc3}\ also holds automatically
  for $k'=2$.
\end{rem}

For our coupling formulas with twistor spinors below we assume that
the signature $(p,q)$\ is such that the spin representation $\De^{p,q}$\ 
is real, in which case also the modeling spin representation $\De^{p+1,q+1}$\ for
the spin tractor bundle is real. This avoids having to complexify the bundles $\La^k T^*M$.

\subsubsection{Twistor spinor coupling}
Let $X,X'\in\Ga(\Si)$\ and fix a $k \geq 0$.
We define an element in 
$\La^{k+1}\mc{T}\cong\La^{k+1}\mc{T}^*$\ by
\[\mb{C}^{k}(X,X')(\Ph)=\mb{B}(\Ph\cdot X,X')\ \forall\ \Ph\in\La^{k+1}\mc{T}.\]

This yields the invariant pairing from spinors to forms,
\begin{align}\label{coupltw}
&\mb{c}^k:\Ga(S[\frac{1}{2}])\times\Ga(S[\frac{1}{2}])\goesto\ce_{[a_1\cdots
a_k]}[k+1],\\\notag &(\chi,\chi')\mapsto \mb{b}(\chi,\ga_{[a_1}\cdots
\ga_{a_k]}\chi').
\end{align}

Since the prolongation connection of $\Si$\ coincides with the tractor connection
this well known map always produces a conformal Killing $k$-form from two given twistor spinors.

\subsubsection{Conformal Killing forms - twistor spinor
    coupling}
  Let $k\geq 0$. The tractor Clifford multiplication provides a map
$\mb{C}^{\ga}:\La^{k+1}\mc{T}\t\Si\goesto\Si$
and the corresponding coupling map is
\begin{align}\label{couplftw}
&\mb{c}^{\ga}:\ce_{[a_1\cdots a_k]}[k+1]\times \Ga(S[\frac{1}{2}])\goesto \Ga(S[\frac{1}{2}]),\\\notag
&\si\times\chi \mapsto
(-1)^{k+1}\frac{2(k+1)}{n}\si\cdot\dirac\chi+(d\si)\cdot\chi+\frac{k(k+1)}{(n-k+1)}(\de\si)\cdot\chi.
\end{align} Here $d\si=D_{[a_0}\si_{a_1\cdots a_k]}$\ is the exterior
derivative of $\si$\ and $\de\si=-g^{pq}D_{p}\si_{qa_2\cdots a_k}$\ is
the divergence of $\si$. The divergence term is trivial
for $k=0$, in which case $\si\in\ce[1]$.
\begin{prop}\label{proptwi1}
    Let $\chi\in\Ga(\mc{S}[\frac{1}{2}])$\ be a twistor spinor.
   If $\si\in\ce[1]$\ is an almost Einstein scale
   or a conformal Killing field $\si\in\ce_a[2]\cong\X(M)$
  then $\eta=\mb{c}^{\ga}(\si,\chi)$\ is again a twistor spinor.

  For both cases, which correspond to  $k=0,k=1$,
  one has
        \begin{align*}
          \na^{\De^{p+1,q+1}}\bigl((L_0^{\La^{k+1}\rr^{p+1,q+1}}\si)\cdot(L_0^{\De^{p+1,q+1}}\chi)\bigr)=0,
        \end{align*}
        which is equivalent to
        \begin{align*}
          L_0^{\De^{p+1,q+1}}\eta=\mb{C}^{\ga}(L_0^{\La^{k+1}\rr^{p+1,q+1}}\si,L_0^{\De^{p+1,q+1}}\chi).
        \end{align*}  
\end{prop}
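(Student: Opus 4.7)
The plan is to lift the coupling to the tractor level and show that the tractor-Clifford product $(L_0^V\si)\cdot(L_0^{\De^{p+1,q+1}}\chi)$ is $\na^{\De^{p+1,q+1}}$-parallel, where $V=\La^{k+1}\rr^{p+1,q+1}$. This will force $L_0^{\De^{p+1,q+1}}\eta = \mb{C}^{\ga}(L_0^V\si,L_0^{\De^{p+1,q+1}}\chi)$ by uniqueness of the splitting operator (parallel sections lie automatically in $\ker\delstar$), and projecting by $\Pi_0^{\De^{p+1,q+1}}$ then shows $\eta=\mb{c}^{\ga}(\si,\chi)$ is a twistor spinor via Proposition \ref{prop-hsss}. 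The main tool is the Leibniz rule \eqref{Ccompat} for the $G$-equivariant pairing underlying tractor Clifford multiplication:
\[
\na^{\De^{p+1,q+1}}(s\cdot X)=(\na^V s)\cdot X + s\cdot \na^{\De^{p+1,q+1}} X,
\]
with $s=L_0^V\si$ and $X=L_0^{\De^{p+1,q+1}}\chi$. Since the spin-tractor prolongation connection coincides with $\na^{\De^{p+1,q+1}}$, $\chi$ being a twistor spinor yields $\na^{\De^{p+1,q+1}}X=0$, and we only need to control $(\na^V s)\cdot X$.

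In the almost Einstein case $k=0$, the prolongation connection on standard tractors also coincides with $\na^{\mc{T}}$, so $\na^{\mc{T}}s=0$ and the Leibniz identity immediately yields $\na^{\De^{p+1,q+1}}(s\cdot X)=0$, finishing this case.

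In the conformal Killing field case $k=1$, Proposition \ref{prop-hsss} gives instead $\na^V s = -\Psi^V s$, so the claim reduces to the algebraic identity $(\Psi^V s)\cdot X = 0$. I would verify this by substituting the explicit lowest-homogeneity part of $\Psi^V s$ from \eqref{formulasLR}, which for $k=1$ reduces to a single Weyl-tensor contraction with $\si$ (the $R$-term vanishes since its coefficient contains $k-1=0$), and the two slots $X=(\tfrac{\sqrt{2}}{n}\dirac\chi,\chi)^T$ from \eqref{twisplit}, into the tractor Clifford formula \eqref{tcliffmult}. The Weyl-curvature contractions then collapse by the first Bianchi identity for $C$, the Clifford-skew property of a $2$-form acting on a spinor, and the defining twistor equation $D_c\chi = -\tfrac{1}{n}\ga_c\dirac\chi$ that algebraically links the two slots of $X$.

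The main obstacle is that \eqref{formulasLR} gives only the lowest-homogeneity piece $\bar\Psi^V$ of the full prolongation correction $\Psi^V$ from \cite{mrh-bgg}; higher-filtration components involve Cotton-type tensors and must also be checked to Clifford-annihilate $X$, using the contracted Bianchi identity. A cleaner alternative that bypasses $\Psi^V$ entirely is to verify $\na^{\De^{p+1,q+1}}(s\cdot X)=0$ slot-by-slot, purely from the explicit formulas \eqref{L0form}, \eqref{twisplit}, the conformal Killing equation on $\si$, and the twistor equation on $\chi$; all curvature terms then cancel via the algebraic Bianchi identity and the Clifford-skew symmetry of $\ga$.
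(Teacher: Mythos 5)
Your overall strategy and your $k=0$ case coincide with the paper: lift to tractors, use the Leibniz rule \eqref{Ccompat}, and conclude via the characterization of $L_0$ and Proposition \ref{prop-hsss}. The gap is in the $k=1$ case. You correctly reduce the claim to $(\Psi^V s)\cdot X=0$, but you never resolve the obstacle you yourself flag: \eqref{formulasLR} records only the lowest-homogeneity part $\bar\Psi^V$, the higher-homogeneity (Cotton-type) components of $\Psi^V$ are not available in the paper, and neither of your two proposed routes (checking those components, or a brute-force slot-by-slot verification of $\na^{\De^{p+1,q+1}}(s\cdot X)=0$) is carried out. The paper's proof is structured precisely so that these components are never needed: since $\Si$ has only two slots, one first shows only that the top-slot ($\mc{S}[\frac{1}{2}]$-valued) component of $\na^{\De^{p+1,q+1}}(s\cdot X)=-\mb{C}^{\ga}(\Psi^V s,X)$ vanishes; by filtration compatibility of tractor Clifford multiplication only $\bar\Psi^V s$ acting on the bottom slot $\chi$ of $X$ contributes there, giving exactly $C_{a_0a_1c}{}^{p}\si_p\ga^{a_0a_1}\chi$. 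Its vanishing yields both $\Th_0^{\De^{p+1,q+1}}\eta=0$ (via Proposition \ref{propTh0W}) and $\na^{\De^{p+1,q+1}}(s\cdot X)\in\Om^1(M,\mc{S}[-\frac{1}{2}])\subset\ker\delstar$, hence $s\cdot X=L_0^{\De^{p+1,q+1}}\eta$; then normality of all twistor-spinor solutions ($\ti\na^{\De^{p+1,q+1}}=\na^{\De^{p+1,q+1}}$) upgrades this to full parallelism, so the bottom-slot (Cotton) identity you worry about comes for free and is never verified directly. This bootstrap is the missing idea in your plan.

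Second, your proposed mechanism for killing the surviving Weyl term is not sufficient. The expression $C_{a_0a_1c}{}^{p}\si_p\ga^{a_0a_1}\chi$ is purely algebraic in $C$, $\si$, $\chi$, and it does not vanish by the first Bianchi identity and Clifford skew-symmetry alone --- indeed, for $k\geq 2$ the entirely analogous expression is precisely the nontrivial obstruction of Proposition \ref{proptwi2}. What makes it vanish for $k=1$ is the classical integrability condition $C_{abcd}\ga^{ab}\chi=0$ of the twistor equation, a differential consequence of $D_c\chi=-\frac{1}{n}\ga_c\dirac\chi$ (equivalently, the lowest slot of $K\bdot X=0$ for the parallel tractor $X$). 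The first-order twistor equation linking the two slots of $X$, which is all you invoke, does not produce this identity at the purely algebraic level of your slot-by-slot check; the paper cites the integrability condition directly, and without it your computation cannot close.
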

\begin{proof}
  For $k=0$, which is the case where $\si\in\ce[1]$\ is an almost
  Einstein scale, both statements follow immediately since the tractor
  connection $\na^{\rr^{p+1,q+1}}$\ is already the prolongation connection
  of $\Th_0^{\rr^{p+1,q+1}}$ and all solutions are normal.
  
  For $k=1$ one has that in formula \eqref{formulasLR}\
  the term $R(\si)$\ vanishes, and
  $L(\si)=C_{a_0a_1\ c}^{\quad\;\ p}\si_{p}$.
  Denote $s=L_0^{\La^2\rr^{p+1,q+1}}\si,X=L_0^{\De^{p+1,q+1}\chi}$.
  Since $\Psi^{\De^{p+1,q+1}}=0$,
  one has, according to Proposition \ref{propTh0W},
  \begin{align*}
    \Th_0^{\De^{p+1,q+1}}\eta=-\Pi^{\De^{p+1,q+1}}_{1,\odot}\bigl(\mb{C}^{\ga}(\Psi^{\La^{2}\rr^{p+1,q+1}} s,X)\bigr)
    \overset{\odot}{=}C_{a_0a_1\ c}^{\quad\;\ p}\si_{p}\ga^{a_0a_1}\chi.
  \end{align*}
  But since $\chi$\ is a twistor spinor, $C_{a_0a_1\ c}^{\quad\;\ p}\si_{p}\ga^{a_0a_1}\chi=0$. This shows that in fact
  \begin{align*}
    \na^{\De^{p+1,q+1}}(s\cdot X)\in\Om^1(M,\mc{S}[-\frac{1}{2}]),
  \end{align*}
  and thus $\delstar(\na^{\de^{p+1,q+1}}(s\cdot X))=0$.
  By definition of $L_0^{\De^{p+1,q+1}}$\ this says
  that $L_0^{\De^{p+1,q+1}}\eta=s\cdot X$, and since
  $\eta\in\ker\Th_0^{\De^{p+1,q+1}}$\ this implies already
  $\na^{\De^{p+1,q+1}}(s\cdot X)=0$\ since $\na^{\De^{p+1,q+1}}$\
  coincides with the prolongation connection.
\end{proof}
\begin{prop}\label{proptwi2}
  Let $\chi\in\Ga(\mc{S}[\frac{1}{2}])$\ be a twistor spinor.
  For $k\geq 2$\ and $\si_{a_1\cdots a_k}\in\ce_{[a_1\cdots a_k]}[k+1]$\
  a conformal Killing form one has that
  $\eta=\mb{c}^{\ga}(\si,\chi)$\ is a twistor spinor if and
  only if  $C_{ca_1}^{\quad pq}\si_{pqa_2\cdots a_k}\ga^{a_1\cdots a_k}\chi\overset{\odot}{=}0.$
\end{prop}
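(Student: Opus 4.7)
The plan is to imitate the proof of Proposition \ref{proptwi1} for the case $k=1$, but now retain the $R(\sigma)$ contribution that was absent there. Setting $s=L_0^{\La^{k+1}\rr^{p+1,q+1}}\sigma$ and $X=L_0^{\De^{p+1,q+1}}\chi$, and using $\Psi^{\De^{p+1,q+1}}=0$, Proposition \ref{propTh0W} reduces the problem to
\[ \Th_0^{\De^{p+1,q+1}}(\eta) = -\Pi^{\De^{p+1,q+1}}_{1,\odot}\bigl(\mb{C}^{\gamma}(\Psi^{\La^{k+1}\rr^{p+1,q+1}}s, X)\bigr). \]
It therefore suffices to show that the right-hand side is, up to a nonvanishing universal scalar, the $\Pi^{\De^{p+1,q+1}}_{1,\odot}$-projection of $C_{ca_1}{}^{pq}\sigma_{pq a_2\cdots a_k}\gamma^{a_1\cdots a_k}\chi$.

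A grading argument shows that only the leading-homogeneity component $\bar\Psi^V=L(\sigma)\oplus R(\sigma)$ of \eqref{prolongV}--\eqref{formulasLR} contributes to $\Pi^{\De^{p+1,q+1}}_{1,\odot}$: the higher-homogeneity parts of $\Psi^V$ take values in the subbundle slot $\ce_{[a_1\cdots a_k]}[k-1]$ of $\La^{k+1}\mc{T}$, and the extension of the tractor Clifford formula \eqref{tcliffmult} to $(k+1)$-forms sends that slot, Clifford-multiplied with any component of $X$, into the subbundle $\mc{S}[-\tfrac{1}{2}]$ of $\Sigma$, which is annihilated by $\Pi^{\De^{p+1,q+1}}_{1,\odot}$. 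The surviving contribution in $\mc{E}_c\t\mc{S}[\tfrac{1}{2}]$ is therefore a universal linear combination of
\[L(\sigma)_{a_0\cdots a_k\,c}\,\gamma^{a_0\cdots a_k}\chi \quad\text{and}\quad R(\sigma)_{a_2\cdots a_k\,c}\,\gamma^{a_2\cdots a_k}\chi,\]
still to be projected onto its Clifford-trace-free (highest-weight) part.

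The remainder is purely algebraic. Substituting the explicit expressions \eqref{formulasLR}, the $g_{c[a_0}$ trace summand of $L(\sigma)$ yields a $\gamma_c\gamma^{a_1\cdots a_k}\chi$-type term; the standard identity $\gamma_c\gamma^{a_1\cdots a_k}=\gamma_c{}^{a_1\cdots a_k}+k\,\delta_c^{[a_1}\gamma^{a_2\cdots a_k]}$ reorganises this as a pure higher-degree Clifford expression, which in combination with the skew term of $L(\sigma)$ is annihilated by the first Bianchi identity $C_{[abc]d}=0$, plus $\gamma_c$-trace terms that are directly killed by the Clifford-trace-free projection. The Weyl tensor's trace-freeness $g^{bd}C_{abcd}=0$ collapses further Weyl-contractions with $\sigma$ and lets the $R(\sigma)$-piece align with the residual $L(\sigma)$-piece. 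The main obstacle is the combinatorial bookkeeping: one must carefully check that, after all these cancellations, the only surviving summand contributing nontrivially to $\HH_1\subset\ce_c\t\mc{S}[\tfrac{1}{2}]$ is a nonzero universal scalar multiple of $C_{ca_1}{}^{pq}\sigma_{pq a_2\cdots a_k}\gamma^{a_1\cdots a_k}\chi$. Once this identity is verified, the bi-implication follows immediately, as $\Th_0^{\De^{p+1,q+1}}(\eta)=0$ is precisely the vanishing of this $\odot$-projection.
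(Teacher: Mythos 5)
Your reduction is the right frame, and it is indeed what the paper intends: Proposition \ref{proptwi2} is stated without a printed proof, and the intended argument is the direct analogue of the proof of Proposition \ref{proptwi1}, namely Proposition \ref{propTh0W} with $\Psi^{\De^{p+1,q+1}}=0$, so that $\Th_0^{\De^{p+1,q+1}}(\eta)=-\Pi^{\De^{p+1,q+1}}_{1,\odot}\bigl(\mb{C}^{\ga}(\Psi^{\La^{k+1}\rr^{p+1,q+1}}s,X)\bigr)$. Your filtration argument is also sound: tractor Clifford multiplication is a $G$-homomorphism, hence filtration preserving, so the deepest slot $\ce_{[a_1\cdots a_k]}[k-1]$ of $\La^{k+1}\mc{T}$ multiplies all of $\Si$ into $\mc{S}[-\frac{1}{2}]$, which $\Pi^{\De^{p+1,q+1}}_{1,\odot}$ annihilates; hence only $\bar\Psi^V=L(\si)\oplus R(\si)$ from \eqref{prolongV} contributes, acting on the quotient part $\chi$ of $X$.

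The gap is in the final algebraic step, which you defer ("once this identity is verified") and whose sketched mechanism is wrong: nowhere do you use that $\chi$ is a twistor spinor beyond $\Psi^{\De^{p+1,q+1}}=0$. Since $X=L_0^{\De^{p+1,q+1}}\chi$ is $\na$-parallel, $K\bdot X=0$, whose lowest homogeneity component is the integrability condition $C_{a_0a_1c}^{\quad\;\; p}\ga^{a_0a_1}\chi=0$ for each fixed pair $(c,p)$ --- precisely the identity the paper invokes in the $k=1$ proof, and it is indispensable here. The skew summand $\frac{k+1}{2}C_{[a_0a_1\,|c}^{\quad\;\ p}\si_{p|a_2\cdots a_k]}$ of $L(\si)$ in \eqref{formulasLR} contributes the degree-$(k+1)$ Clifford term $C_{a_0a_1c}^{\quad\;\; p}\si_{pa_2\cdots a_k}\ga^{a_0\cdots a_k}\chi$, which is neither a pure $\ga_c$-trace (so the $\odot$-projection does not remove it) nor totally alternating over three indices of $C$ (so $C_{[abc]d}=0$ does not remove it); moreover it has different Clifford degree than the terms produced by the $g_{c[a_0}$-summand of $L(\si)$ (degree $k-1$ after stripping the $\ga_c$-trace) and by $R(\si)$, so the cancellation you claim via the first Bianchi identity cannot occur --- these pieces never meet. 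The correct mechanism is: expand $\ga^{a_0\cdots a_k}$ through $\ga^{a_0a_1}\ga^{a_2\cdots a_k}$ minus contraction corrections, commute the even element $\ga^{a_0a_1}$ past $\ga^{a_2\cdots a_k}$ (only single-contraction commutator terms arise), and apply $C_{a_0a_1c}^{\quad\;\; p}\ga^{a_0a_1}\chi=0$; the leading term then collapses into single contractions of exactly the shape $C_{ca_1}^{\quad pq}\si_{pqa_2\cdots a_k}\ga^{a_1\cdots a_k}\chi$, plus double contractions of type $C_{c[pq_1q_2]}\si^{pq_1q_2}{}_{a_4\cdots a_k}$, and it is only these last that the first Bianchi identity kills. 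As written, your argument would leave the $(k+1)$-gamma Weyl term as an extra, independent obstruction, and the stated equivalence would not follow; with the integrability condition supplied, the bookkeeping closes up as you describe.
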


\subsection{Generic twistor spinors}\label{subsecgen}
We start with an algebraic observation. Take a $k\geq 0$\ and the map
\begin{align*}
  C:\De^{p+1,q+1}\times\De^{p+1,q+1}\goesto\La^{k+1}\rr^{p+1,q+1},
\end{align*}
realized with respect to the $\Spin(p+1,q+1)$-invariant pairing
$B\in{\De^{p+1,q+1}}^*\t{\De^{p+1,q+1}}^*$.

For a fixed $X\in\De^{p+1,q+1}$\ we can form
\begin{align*}
  i_XC:\De^{p+1,q+1}\goesto\La^{k+1}\rr^{p+1,q+1},
\end{align*}
which is $G:=\Spin(p+1,q+1)_X$-invariant.
The following lemma is then easily checked.
  \begin{lem}
    Assume that $B(X,X)\not =0$. Then, after some suitable rescaling,
    one has that the map
    \begin{align*}
      P:\La^{k+1}\rr^{p+1,q+1}\goesto\La^{k+1}\rr^{p+1,q+1},\ 
      \Phi\mapsto i_XC(\Phi\cdot X)
    \end{align*}
    satisfies $P\circ P=\pm P$.

    Then $\ker P=\ker \Ga X$\ and we
    obtain a $G$-invariant decomposition
    \begin{align*}
      \La^{k+1}\rr^{p+1,q+1}=\ker\Ga X\oplus \im P.
    \end{align*}
  \end{lem}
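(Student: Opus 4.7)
The plan is to recognize $P$ as essentially a self-composition $\Gamma_X^* \circ \Gamma_X$, where $\Gamma_X \colon \Lambda^{k+1}\mathbb{R}^{p+1,q+1} \to \Delta^{p+1,q+1}$ denotes Clifford multiplication on $X$, i.e.\ $\Phi \mapsto \Phi \cdot X$, and $\Gamma_X^*$ is its adjoint with respect to the natural inner product $h$ on $\Lambda^{k+1}\mathbb{R}^{p+1,q+1}$ (induced by that on $\mathbb{R}^{p+1,q+1}$) and the spinor pairing $B$ on $\Delta^{p+1,q+1}$. Unpacking the definition of $i_X C$, which by construction satisfies $(i_X C(Y))(\Phi) = B(\Phi \cdot X, Y)$, one checks $i_X C = \pm \Gamma_X^*$, with the sign governed by the $(-1)^p$ symmetry of $B$ from \eqref{tcliffpairing}. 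So $P = \pm\,\Gamma_X^* \circ \Gamma_X$.

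The first task is to prove $P \circ P = c\cdot P$ for some non-zero scalar $c$ proportional to $B(X,X)$, since then dividing $P$ by the appropriate factor gives the asserted $P\circ P = \pm P$. I would do this by a direct Fierz-type calculation: picking an $h$-orthonormal basis $\{e_I\}$ of $\Lambda^{k+1}\mathbb{R}^{p+1,q+1}$, one expands
\[
  P(\Phi) \;=\; \sum_I B(\Phi\cdot X, e_I \cdot X)\, e^I,
\]
substitutes this into $P\circ P(\Phi) = \Gamma_X^*\bigl(P(\Phi)\cdot X\bigr)$, and then collapses the resulting Fierz sum $\sum_I (e_I \cdot X)\otimes B(\,\cdot\,, e_I \cdot X)$ using the Clifford-invariance of $B$ (promoted to $(k+1)$-forms by the parity $(-1)^{p(k+1)}$) and the defining anticommutation $v\cdot w + w\cdot v = -2h(v,w)$. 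The assumption $B(X,X)\neq 0$ ensures the resulting scalar is non-zero and makes the rescaling legitimate.

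With $P^2 = \pm P$ in hand, the decomposition $\Lambda^{k+1}\mathbb{R}^{p+1,q+1} = \ker P \oplus \mathrm{im}\,P$ is the standard idempotent splitting (applied to $\pm P$), and it is $G$-invariant because $P$ is $G$-equivariant: $X$ is $G$-fixed, while $B$ and Clifford multiplication are equivariant for the whole $\Spin(p+1,q+1)$-action. For the identification $\ker P = \ker \Gamma_X$, the inclusion $\ker\Gamma_X \subseteq \ker P$ is immediate from the factorisation $P = \Gamma_X^* \circ \Gamma_X$. For the reverse inclusion, if $P(\Phi)=0$ then $h(P(\Phi),\Psi) = \pm B(\Gamma_X\Phi, \Gamma_X\Psi) = 0$ for all $\Psi$; i.e.\ $\Gamma_X(\Phi)$ is $B$-orthogonal to $\mathrm{im}\,\Gamma_X$. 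But the very Fierz computation used above shows that $B\big|_{\mathrm{im}\,\Gamma_X}$ is non-degenerate as soon as $B(X,X)\neq 0$ — indeed, $P$ acts on $\mathrm{im}\,\Gamma_X^*$ as a non-zero multiple of the identity. This forces $\Gamma_X(\Phi)=0$.

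The main obstacle is keeping the signs honest in the Fierz identity of the second paragraph: one must track the $(-1)^p$ from \eqref{tcliffpairing}, the $(-1)^{k+1}$ arising from reversing Clifford multiplication past a $(k+1)$-form, and the combinatorial constant from the trace over $\Lambda^{k+1}$. These conspire to produce the ambiguity $\pm$ in $P\circ P = \pm P$ after rescaling, and also explain why the constant $c$ is proportional to $B(X,X)$ rather than a higher power — precisely one factor of $X$ is paired with $X$ when the Fierz sum is contracted, the others reorganize via the Clifford relations.
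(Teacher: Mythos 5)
First, a point of reference: the paper offers no proof of this lemma at all --- it is introduced with ``The following lemma is then easily checked'' --- so your attempt cannot be matched against a written argument; it has to stand on its own. Your opening move is fine and surely the intended framing: unpacking the definitions gives $P=\pm\,\Gamma_X^*\circ\Gamma_X$ with $\Gamma_X(\Phi)=\Phi\cdot X$ and the adjoint taken with respect to $h$ on $\La^{k+1}\rr^{p+1,q+1}$ and $B$ on $\De^{p+1,q+1}$. But the heart of your argument --- that the fixed-degree Fierz sum $\sum_I (e_I\cdot X)\otimes B(\,\cdot\,,e_I\cdot X)$ ``collapses'' to a multiple of $P$ using only the Clifford-invariance of $B$ and the anticommutation relations, with constant proportional to $B(X,X)$ --- is asserted, not proved, and it is not a consequence of the ingredients you cite. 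Completeness (Fierz) identities hold only when one sums over \emph{all} form degrees $0\le j\le n$; truncating to the single degree $k+1$ leaves the operator $Q=\Gamma_X\Gamma_X^*$ equal to Clifford multiplication by an aggregate of \emph{all} the nonvanishing Dirac currents $B(\alpha\cdot X,X)$ of $X$, not just the scalar one. This is visible already in the paper's flagship application: for a generic spinor $X\in\De^{3,4}$ (stabilizer the split $G_2$, base signature $(2,3)$) and $k+1=2$, the degree-$4$ current is the $G_2$-invariant $4$-form and contributes genuinely to $P$ --- indeed it is exactly what gives $P$ the kernel $\mf{g}_2\subset\La^2\rr^{3,4}$, so $P$ is certainly not a multiple of the identity and the idempotency $P\circ P=\pm P$ rests on special identities of that $4$-form (equivalently, on Schur's lemma applied to the $G=\Spin(p+1,q+1)_X$-equivariant map $Q$ on the irreducible piece $\im\Gamma_X\subset\De^{p+1,q+1}$). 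Your computation as described would instead yield $P$ proportional to the identity, which contradicts the lemma's own conclusion that $\ker P=\ker\Gamma X$ can be nonzero.

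There is a second, related gap at the end. From $P\circ P=cP$ alone one only gets $\ker\Gamma_X\subseteq\ker P$ and that $P$ acts as $c\cdot\id$ on $\im P$; to conclude $\ker P=\ker\Gamma_X$ you need $P$ to act as a nonzero scalar on all of $\im\Gamma_X^*$, equivalently the stronger ``generalized inverse'' identity $\Gamma_X\Gamma_X^*\Gamma_X=c\,\Gamma_X$ (equivalently, nondegeneracy of $B$ restricted to $\im\Gamma_X$). Your sentence claiming $P$ acts as a nonzero multiple of the identity on $\im\Gamma_X^*$ is precisely this stronger statement, and you justify it by appeal to ``the very Fierz computation used above'' --- i.e., by the step that is already unsubstantiated. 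What is sound in your write-up: the reduction of the direct-sum decomposition to idempotency of $\mp P$, the $G$-invariance argument (since $X$ is $G$-fixed and $B$, $h$, Clifford multiplication are equivariant), and the observation that $\ker P=\ker\Gamma_X$ is equivalent to nondegeneracy of $B$ on $\im\Gamma_X$. To repair the proof you should either carry out the Fierz expansion honestly, keeping all current terms and proving the required quadratic identities for them, or argue representation-theoretically via Schur's lemma on $\im\Gamma_X$ as a module for the stabilizer of $X$ --- noting that the latter uses (absolute) irreducibility, which holds in the generic situations the paper applies the lemma to but is an extra hypothesis hidden in the lemma's terse statement.
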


  \begin{defn}
    We say that a twistor spinor $\chi\in\Ga(\mc{S}[\frac{1}{2}])$\ 
    is generic if $\mb{b}(\chi,\dirac\chi)\not=0$.
  \end{defn}
  It is visible from \eqref{tcliffpairing}\
  that a twistor spinor $\chi\in\Ga(\mc{S}[\frac{1}{2}])$\ is generic
  if and only if the corresponding $\na^{\De^{p+1,q+1}}$-parallel tractor
  $X=L_0^{\De^{p+1,q+1}}\chi$\ satisfies $\mb{B}(X,X)\not=0$.

  Now for a twistor spinor $\chi$\ the coupling map
   $ \X(M)\times \mc{S}[\frac{1}{2}] \goesto \mc{S}[\frac{1}{2}]$
  can be rewritten into
  \begin{align*}
   \xi\times\chi\mapsto D_{\xi}\chi-\frac{1}{4}(D_{[a}\xi_{b]})\cdot\chi+\frac{1}{2n}(D_p\xi^p)\chi.
  \end{align*}
  For a conformal Killing field $\xi\in\X(M)$
  this is just the Lie derivative of the (weighted) spinor $\chi$\ with respect to $\xi$,
  \cite{kosmann,fatibene}.
  Our algebraic observation from above together with
  Proposition \ref{proptwi1} yields:
  \begin{prop}\label{propdecomp}
       Every generic twistor spinor $\chi$\ provides a decomposition
  \begin{align}\label{equcKf}
   \cKf(\mc{C})=\cKf_{\chi}(\mc{C})\oplus\cKf_{\chi}^{\perp}(\mc{C})
  \end{align}
  of conformal Killing fields into a part which also preserves $\chi$ and
  a canonical complement.
  The projection
  \begin{align*}
    \cKf(\mc{C})\goesto\cKf_{\chi}^{\perp}(\mc{C})
  \end{align*}
  is given by
  \begin{align}\label{cKfprojection}
    \xi^a\mapsto \mb{b}(\ga^a\chi,D_{\xi}\chi-\frac{1}{4}(D_{[a}\xi_{b]})\cdot\chi+\frac{1}{2n}(D_p\xi^p)\chi).
  \end{align}
  \end{prop}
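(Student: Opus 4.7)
My plan is to lift both the splitting and the projection formula to the tractor bundle, where they reduce to the algebraic lemma above together with Proposition~\ref{proptwi1}.

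First I would identify the right-hand side of \eqref{cKfprojection} as a coupling map. Setting $\mc{L}_\xi\chi := D_\xi\chi-\tfrac{1}{4}(D_{[a}\xi_{b]})\cdot\chi+\tfrac{1}{2n}(D_p\xi^p)\chi$ for $\xi\in\cKf(\mc{C})$, this expression agrees, up to an overall constant, with the coupling $\mb{c}^\ga(\xi,\chi)$ from \eqref{couplftw} at $k=1$. Proposition~\ref{proptwi1} then tells me that $\mc{L}_\xi\chi$ is a twistor spinor and that $L_0^{\De^{p+1,q+1}}(\mc{L}_\xi\chi)=s\cdot X$, with $s:=L_0^{\La^2\rr^{p+1,q+1}}\xi$ and $X:=L_0^{\De^{p+1,q+1}}\chi$. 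So, setting $\cKf_\chi(\mc{C}):=\{\xi\in\cKf(\mc{C})\ :\ \mc{L}_\xi\chi=0\}$, injectivity of $L_0$ gives $\xi\in\cKf_\chi(\mc{C})\Leftrightarrow s\cdot X=0\Leftrightarrow s\in\Ga(\ker\Ga X)$.

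Next I would invoke the algebraic lemma above. Genericity of $\chi$ is by definition $\mb{B}(X,X)\neq 0$, so (applied fiberwise to the tractor bundles) the lemma yields a decomposition $\La^2\mc{T}=\ker\Ga X\oplus\im\mb{P}$, with a tractor endomorphism $\mb{P}$ satisfying $\mb{P}\circ\mb{P}=\pm\mb{P}$ and $\ker\mb{P}=\ker\Ga X$; the decomposition is $\na$-parallel because $X$ is. I would then define $\pi:\cKf(\mc{C})\to\cKf(\mc{C})$ by the right-hand side of \eqref{cKfprojection}, i.e.\ $\pi(\xi)^a:=\mb{b}(\ga^a\chi,\mc{L}_\xi\chi)$. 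Up to a sign this is $\mb{c}^1(\chi,\mc{L}_\xi\chi)$ from \eqref{coupltw} applied to the two twistor spinors $\chi$ and $\mc{L}_\xi\chi$, and since the prolongation connection on $\Si$ coincides with the tractor connection, such a coupling always yields a conformal Killing field, so $\pi$ indeed takes values in $\cKf(\mc{C})$.

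Translated to the tractor bundle, $\pi$ is precisely the composition $s\mapsto s\cdot X\mapsto i_X\mb{C}(s\cdot X)=\mb{P}(s)$, so the algebraic identities $\mb{P}\circ\mb{P}=\pm\mb{P}$ and $\ker\mb{P}=\ker\Ga X$ deliver $\pi\circ\pi=\pm\pi$ and $\ker\pi=\cKf_\chi(\mc{C})$. Setting $\cKf_\chi^\perp(\mc{C}):=\im\pi$ then produces the desired direct sum decomposition \eqref{equcKf} and simultaneously verifies \eqref{cKfprojection}. The main obstacle I expect is the careful identification of the tractor realization of $\pi$ with the algebraic map $\mb{P}$: this requires matching the spinor pairing $\mb{b}(\ga^a\chi,\cdot)$ with the form-valued map $\Phi\mapsto\mb{B}(\Phi\cdot X,\cdot)$ appearing in the lemma, and tracking the Clifford-invariance signs between $\mb{b}$ and $\mb{B}$. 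Once this bookkeeping is settled, the rest is an essentially tautological application of Proposition~\ref{proptwi1}.
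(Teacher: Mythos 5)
Your proposal is correct and follows exactly the route the paper intends: the paper offers no written-out proof beyond the remark that the statement follows from the algebraic lemma together with Proposition~\ref{proptwi1}, and your argument is precisely that derivation made explicit --- identifying \eqref{cKfprojection} with the composition $s\mapsto \mb{P}(s)=i_X\mb{C}(s\cdot X)$ on parallel tractors via $L_0(\mc{L}_\xi\chi)=s\cdot X$, and importing $\mb{P}\circ\mb{P}=\pm\mb{P}$, $\ker\mb{P}=\ker\Ga X$ fiberwise (the decomposition being $\na$-parallel since $X$ is). The bookkeeping points you flag (sign conventions between $\mb{b}$ and $\mb{B}$, and that $\mb{C}^1(X,s\cdot X)$ is parallel and hence equals $L_0$ of its projection, which is what makes the iteration $\pi\circ\pi=\pm\pi$ legitimate at the section level) are exactly the details the paper suppresses.
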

   One should regard a generic $\chi$\ on $(M,\mc{C})$
   as a refinement of the the geometric structure $\mc{C}$,
   and \eqref{equcKf}\ then
   says that there is a corresponding cKf-decomposition. This is motivated
   by the following example.

\subsubsection{Generic twistor spinors on conformal spin structures of signature $(2,3)$\ and $(3,3)$}
  Let $(M,\mc{C},\chi)$\ be a conformal spin structure of signature $(2,3)$\
  with a generic twistor spinor $\chi$.   
  Now genericity of $\chi$\ implies that  $\mc{D}_{\chi}=\ker\ga\chi$\
  is a generic rank $2$\ distribution on $M$, \cite{mrh-sag-twistors}: The subbundle $[\mc{D}_{\chi},\mc{D}_{\chi}]$ of $TM$\ spanned by Lie brackets 
  of sections of $\mc{D}_{\chi}$\ is $3$-dimensional and
  $TM=[\mc{D}_{\chi},[\mc{D}_{\chi},\mc{D}_{\chi}]]$.
  Similarly, if $\chi$\ is a generic twistor spinor an a $(3,3)$-signature
  conformal spin structure, then $\mc{D}_{\chi}=\ker\ga\chi\subset TM$\
  is a generic $3$-distribution on $M$: $[\mc{D}_{\chi},\mc{D}_{\chi}]=TM$.
  
  The conformal spin structure $\mc{C}$\ together with the generic
  twistor spinor $\chi$\ are uniquely determined by $\mc{D}\subset TM$.
  This is shown in \cite{mrh-sag-twistors} via a Fefferman-type construction which starts from any generic $2$- resp. $3$-distribution $\mc{D}\subset TM$\ and
  associates $(\mc{C}_{\mc{D}},\chi_{\mc{D}})$. Since
  there are non-flat generic distributions, this yields examples of non-flat conformal spin structures with generic twistor
  spinors.   

  It follows that the infinitesimal symmetries $\sym(\mc{D}_{\chi})$
  of the distribution $\mc{D}_{\chi}$\ are exactly those
  conformal Killing fields which preserve the twistor spinor $\chi$,
  and according to Proposition \ref{propdecomp}
  \begin{align*}
    \cKf(\mc{C})=\sym(\mc{D}_{\chi})\oplus\cKf_{\chi}^{\perp}(\mc{C}).
  \end{align*}

For signature $(2,3)$\ one obtains a particularly simple
decomposition since in that case $\cKf_{\chi}^{\perp}(\mc{C})=\aEs(\mc{C})$.
Using compositions of the coupling maps \eqref{coupltw},\eqref{couplftw}
one obtains explicit formulas: 
An almost Einstein scale $\si\in\ce[1]$\ is
mapped to the conformal Killing field
$\xi^a=\mb{b}(\ga^a\chi,-\frac{2}{5}\si\dirac\chi+(D\si)\cdot\chi)$
and
the almost Einstein
scale part of a conformal Killing field $\xi\in\X(M)$\ is
$\si=\mb{b}(\chi,D_{\xi}\chi-\frac{1}{4}(D_{[a}\xi_{b]})\cdot\chi)\in\ce[1]$.
The term $\frac{1}{10}(\de\xi)\chi$ does not appear
here since for signature $(2,3)$\ the invariant pairing $\mb{b}$\ is skew.

\end{document}